\newtheorem{theorem}{Theorem}
\newtheorem{lemma}[theorem]{Lemma}
\theoremstyle{definition}
{

\newtheorem{problem}{Problem}
}
\long\def\symbolfootnote[#1]#2{\begingroup
\def\thefootnote{\fnsymbol{footnote}}\footnote[#1]{#2}\endgroup}
\newcommand{\red}[1][\sigma]{\mathrm{red}(#1)}
\newcommand{\sg}{\sigma}
\newcommand{\mmp}{\mathrm{mmp}}
\newcommand{\MMP}{\mathrm{MMP}}
\def\A{\mathcal{A}}
\newcommand{\fig}[2]{\begin{figure}[ht]
\centerline{\scalebox{.66}{\epsfig{file=#1.eps}}}
\caption{#2}
\label{fig:#1}
\end{figure}}
\newcommand{\shadetheboxes}[1]{
	\foreach \x/\y in {#1}
      	\fill[pattern color = black!65, pattern=north east lines] (\x,\y) rectangle +(1,1);
	}
\newcommand{\drawthegrid}[1]{
	\draw (0.01,0.01) grid (#1+0.99,#1+0.99);
	}
\newcommand{\drawtheclpattern}[1]{
	\foreach \x/\y in {#1}
      	\filldraw (\x,\y) circle (6pt);
	}
\newcommand{\drawspecialbox}[1]{
	\foreach \x/\y/\z/\w/\A in {#1}
		{
       		\fill[color = white!100, opacity=1, rounded corners = 1.5pt] (\x+0.125,\y+0.125) rectangle (\z-0.125,\w-0.125);
       		\draw[color = black, rounded corners = 1.5pt] (\x+0.125,\y+0.125) rectangle (\z-0.125,\w-0.125);
       		\fill[black] (\x/2+\z/2,\y/2+\w/2) node {$\scriptstyle\A$};
       	}
    }
\newcommand{\mmpattern}[5]{									
  \raisebox{0.6ex}{
  \begin{tikzpicture}[scale=0.35, baseline=(current bounding box.center), #1]
  \useasboundingbox (0.0,-0.1) rectangle (#2+1.4,#2+1.1);
    
    \shadetheboxes{#4}
    
    \drawthegrid{#2}
    
    \drawspecialbox{#5}
    
    \drawtheclpattern{#3}

  \end{tikzpicture}}
}
\title{Quadrant marked mesh patterns in $132$-avoiding permutations III}
\author{
Sergey Kitaev \\
\small University of Strathclyde\\[-0.8ex]
\small Livingstone Tower, 26 Richmond Street\\[-0.8ex]
\small Glasgow G1 1XH, United Kingdom\\[-0.8ex]
\small \texttt{sergey.kitaev@cis.strath.ac.uk}
\and
Jeffrey Remmel \\
\small Department of Mathematics\\[-0.8ex]
\small University of California, San Diego\\[-0.8ex]
\small La Jolla, CA 92093-0112. USA\\[-0.8ex]
\small \texttt{jremmel@ucsd.edu}
\and
Mark Tiefenbruck\\[-0.8ex]
\small Department of Mathematics\\[-0.8ex]
\small University of California, San Diego\\[-0.8ex]
\small La Jolla, CA 92093-0112. USA\\[-0.8ex]
\small \texttt{mtiefenb@math.ucsd.edu}
}
\date{\small Submitted: Date 1;  Accepted: Date 2;
 Published: Date 3.\\
\small MR Subject Classifications: 05A15, 05E05}
\begin{document}
\maketitle

\begin{abstract}
\noindent 
Given a permutation $\sg = \sg_1 \ldots \sg_n$ in the symmetric group 
$S_n$, we say that $\sg_i$ matches the marked mesh pattern 
$MMP(a,b,c,d)$ in $\sg$ if there are at least 
$a$ points to the right of $\sg_i$ in $\sg$ which are greater than 
$\sg_i$, at least $b$ points to the left of $\sg_i$ in $\sg$ which 
are greater than $\sg_i$,  at least $c$ points to the left of 
$\sg_i$ in $\sg$ which are smaller  than $\sg_i$, and 
at least  $d$ points to the right of $\sg_i$ in $\sg$ which 
are smaller than $\sg_i$.

This paper is continuation of the systematic study of the distribution 
of quadrant marked mesh patterns in 132-avoiding permutations 
started in \cite{kitremtie} and \cite{kitremtieII} where 
we studied the distribution of the number of matches 
of $MMP(a,b,c,d)$ in 132-avoiding permutations 
where at most two elements of of  $a,b,c,d$ are greater 
than zero and the remaining elements are zero. In this paper, 
we study  the distribution of the number of matches 
of $MMP(a,b,c,d)$ in 132-avoiding permutations 
where at least three of $a,b,c,d$ are greater 
than zero. 
We provide explicit recurrence relations to enumerate our objects which 
can be used to give closed forms for the generating functions associated 
with such distributions. In many  cases, we provide combinatorial explanations of the coefficients that appear in our generating functions. \\

\noindent {\bf Keywords:} permutation statistics, quadrant marked mesh pattern, distribution
\end{abstract}

\tableofcontents

\section{Introduction}

The notion of mesh patterns was introduced by Br\"and\'en and Claesson \cite{BrCl} to provide explicit expansions for certain permutation statistics as, possibly infinite, linear combinations of (classical) permutation patterns.  This notion was further studied in \cite{AKV,HilJonSigVid,kitlie,kitrem,kitremtie,Ulf}.

Kitaev and Remmel \cite{kitrem} initiated the systematic study of distribution of quadrant marked mesh patterns on permutations. The study was extended to 132-avoiding permutations by Kitaev, Remmel and Tiefenbruck in \cite{kitremtie,kitremtieII}, and the present paper continues this line of research. 
Kitaev and Remmel also studied the distribution of quadrant marked 
mesh patterns 
in up-down and down-up permutations \cite{kitrem2,kitrem3}. 

Let $\sigma = \sg_1 \ldots \sg_n$ be a permutation written in one-line notation. Then we will consider the 
graph of $\sg$, $G(\sg)$, to be the set of points $(i,\sg_i)$ for 
$i =1, \ldots, n$.  For example, the graph of the permutation 
$\sg = 471569283$ is pictured in Figure 
\ref{fig:basic}.  Then if we draw a coordinate system centered at a 
point $(i,\sg_i)$, we will be interested in  the points that 
lie in the four quadrants I, II, III, and IV of that 
coordinate system as pictured 
in Figure \ref{fig:basic}.  For any $a,b,c,d \in  
\mathbb{N} = \{0,1,2, \ldots \}$ and any $\sg = \sg_1 \ldots \sg_n \in S_n$, the set of all permutations of length $n$, we say that $\sg_i$ matches the 
quadrant marked mesh pattern $\MMP(a,b,c,d)$ in $\sg$ if, in $G(\sg)$  
relative 
to the coordinate system which has the point $(i,\sg_i)$ as its  
origin, there are at least $a$ points in quadrant I, 
at least $b$ points in quadrant II, at least $c$ points in quadrant 
III, and at least $d$ points in quadrant IV.  
For example, 
if $\sg = 471569283$, the point $\sg_4 =5$  matches 
the marked mesh pattern $\MMP(2,1,2,1)$ since, in $G(\sg)$ relative 
to the coordinate system with the origin at $(4,5)$,  
there are 3 points in quadrant I, 
1 point in quadrant II, 2 points in quadrant III, and 2 points in 
quadrant IV.  Note that if a coordinate 
in $\MMP(a,b,c,d)$ is 0, then there is no condition imposed 
on the points in the corresponding quadrant. 

In addition, we considered patterns  $\MMP(a,b,c,d)$ where 
$a,b,c,d \in \mathbb{N} \cup \{\emptyset\}$. Here when 
a coordinate of $\MMP(a,b,c,d)$ is the empty set, then for $\sg_i$ to match  
$\MMP(a,b,c,d)$ in $\sg = \sg_1 \ldots \sg_n \in S_n$, 
it must be the case that there are no points in $G(\sg)$ relative 
to the coordinate system with the origin at $(i,\sg_i)$ in the corresponding 
quadrant. For example, if $\sg = 471569283$, the point 
$\sg_3 =1$ matches 
the marked mesh pattern $\MMP(4,2,\emptyset,\emptyset)$ since  
in $G(\sg)$ relative 
to the coordinate system with the origin at $(3,1)$, 
there are 6 points in quadrant I, 
2 points in quadrant II, no  points in quadrants III  and IV.   We let 
$\mmp^{(a,b,c,d)}(\sg)$ denote the number of $i$ such that 
$\sg_i$ matches $\MMP(a,b,c,d)$ in $\sg$.

\fig{basic}{The graph of $\sg = 471569283$.}

Note how the (two-dimensional) notation of \'Ulfarsson \cite{Ulf} for {\em marked mesh patterns} corresponds to our (one-line) notation for quadrant marked mesh patterns. For example,

\[
\MMP(0,0,k,0)=\mmpattern{scale=2.3}{1}{1/1}{}{0/0/1/1/k}\hspace{-0.25cm},\  \MMP(k,0,0,0)=\mmpattern{scale=2.3}{1}{1/1}{}{1/1/2/2/k}\hspace{-0.25cm},
\]

\[
\MMP(0,a,b,c)=\mmpattern{scale=2.3}{1}{1/1}{}{0/1/1/2/a} \hspace{-2.07cm} \mmpattern{scale=2.3}{1}{1/1}{}{0/0/1/1/b} \hspace{-2.07cm} \mmpattern{scale=2.3}{1}{1/1}{}{1/0/2/1/c} \ \mbox{ and }\ \ \ \MMP(0,0,\emptyset,k)=\mmpattern{scale=2.3}{1}{1/1}{0/0}{1/0/2/1/k}\hspace{-0.25cm}.
\]

Given a sequence $w = w_1 \ldots w_n$ of distinct integers,
let $\red[w]$ be the permutation found by replacing the
$i$-th largest integer that appears in $\sg$ by $i$.  For
example, if $\sg = 2754$, then $\red[\sg] = 1432$.  Given a
permutation $\tau=\tau_1 \ldots \tau_j$ in the symmetric group $S_j$, we say that the pattern $\tau$ {\em occurs} in $\sg = \sg_1 \ldots \sg_n \in S_n$ provided   there exists 
$1 \leq i_1 < \cdots < i_j \leq n$ such that 
$\red[\sg_{i_1} \ldots \sg_{i_j}] = \tau$.   We say 
that a permutation $\sg$ {\em avoids} the pattern $\tau$ if $\tau$ does not 
occur in $\sg$. Let $S_n(\tau)$ denote the set of permutations in $S_n$ 
which avoid $\tau$. In the theory of permutation patterns,   $\tau$ is called a {\em classical pattern}. See \cite{kit} for a comprehensive introduction to 
patterns in permutations. 

It has been a rather popular direction of research in the literature on permutation patterns to study permutations avoiding a 3-letter pattern subject to extra restrictions (see \cite[Subsection 6.1.5]{kit}). In \cite{kitremtie},
we started the study of the generating functions 
\begin{equation*} \label{Rabcd}
Q_{132}^{(a,b,c,d)}(t,x) = 1 + \sum_{n\geq 1} t^n  Q_{n,132}^{(a,b,c,d)}(x)
\end{equation*}
where for  any $a,b,c,d \in \{\emptyset\} \cup \mathbb{N}$, 
\begin{equation*} \label{Rabcdn}
Q_{n,132}^{(a,b,c,d)}(x) = \sum_{\sg \in S_n(132)} x^{\mmp^{(a,b,c,d)}(\sg)}.
\end{equation*}
For any $a,b,c,d$, we will write $Q_{n,132}^{(a,b,c,d)}(x)|_{x^k}$ for 
the coefficient of $x^k$ in $Q_{n,132}^{(a,b,c,d)}(x)$.

There is one obvious symmetry for such generating functions which is induced 
by the fact that if $\sg \in S_n(132)$, then $\sg^{-1} \in S_n(132)$. 
That is, the following lemma was proved in  \cite{kitremtie}.

\begin{lemma}\label{sym} {\rm (\cite{kitremtie})}
For any $a,b,c,d \in \{\emptyset\} \cup \mathbb{N}$, 
\begin{equation*}
Q_{n,132}^{(a,b,c,d)}(x) = Q_{n,132}^{(a,d,c,b)}(x). 
\end{equation*}
\end{lemma}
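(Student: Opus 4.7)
The plan is to use the involution $\sigma \mapsto \sigma^{-1}$ on $S_n(132)$, precisely as hinted by the sentence preceding the lemma. The key observation is that the graph $G(\sigma^{-1})$ is obtained from $G(\sigma)$ by reflection across the line $y=x$, since a point $(i,\sigma_i)\in G(\sigma)$ corresponds to the point $(\sigma_i,i)\in G(\sigma^{-1})$.

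First, I would verify that the inverse map sends $S_n(132)$ to itself. This is straightforward because the pattern $132$ is itself an involution (its inverse as a permutation of $\{1,2,3\}$ equals $132$), so $\sigma$ contains a $132$-pattern if and only if $\sigma^{-1}$ does. Hence $\sigma \mapsto \sigma^{-1}$ is a bijection from $S_n(132)$ to $S_n(132)$.

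Next, I would track what the reflection does to the four quadrants around a point. Fix $\sigma_i$ and consider the corresponding point $(\sigma_i,i)\in G(\sigma^{-1})$. Reflection across $y=x$ preserves quadrants I and III but swaps quadrants II and IV. Concretely, a point $(\sigma_j,j)$ lies in the quadrant northeast (resp.\ southeast, southwest, northwest) of $(\sigma_i,i)$ in $G(\sigma^{-1})$ iff $\sigma_j>\sigma_i$ and $j>i$ (resp.\ $\sigma_j<\sigma_i$ and $j>i$; $\sigma_j<\sigma_i$ and $j<i$; $\sigma_j>\sigma_i$ and $j<i$), which corresponds in $G(\sigma)$ to quadrants I, IV, III, II around $(i,\sigma_i)$, respectively. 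The same correspondence holds if any of the parameters is $\emptyset$, since the set of points in a quadrant is what gets permuted. Therefore
\[
\mmp^{(a,b,c,d)}(\sigma) = \mmp^{(a,d,c,b)}(\sigma^{-1}).
\]

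Finally, summing over $S_n(132)$ and re-indexing by $\tau=\sigma^{-1}$ yields
\[
Q_{n,132}^{(a,b,c,d)}(x) = \sum_{\sigma\in S_n(132)} x^{\mmp^{(a,b,c,d)}(\sigma)} = \sum_{\tau\in S_n(132)} x^{\mmp^{(a,d,c,b)}(\tau)} = Q_{n,132}^{(a,d,c,b)}(x),
\]
which is the desired identity. There is no real obstacle here: the only thing to be careful about is the bookkeeping of which two quadrants swap under the reflection (II and IV, not I and III), and handling the $\emptyset$ case uniformly by observing that the bijection on points preserves cardinalities including the "empty" condition.
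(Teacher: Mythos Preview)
Your proof is correct and follows exactly the approach indicated in the paper: the lemma is stated there as a consequence of the involution $\sigma\mapsto\sigma^{-1}$ on $S_n(132)$, with the full argument deferred to \cite{kitremtie}. One minor slip: in your compass-direction bookkeeping for $G(\sigma^{-1})$ the labels ``southeast'' and ``northwest'' are interchanged, but your correspondence of the four conditions to quadrants I, IV, III, II in $G(\sigma)$ is right, so the conclusion $\mmp^{(a,b,c,d)}(\sigma)=\mmp^{(a,d,c,b)}(\sigma^{-1})$ stands.
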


In \cite{kitremtie}, we studied the generating 
functions $Q_{132}^{(k,0,0,0)}(t,x)$,  
$Q_{132}^{(0,k,0,0)}(t,x) = Q_{132}^{(0,0,0,k)}(t,x)$, and 
$Q_{132}^{(0,0,k,0)}(t,x)$  where $k$ can be either 
the empty set or a positive integer as well as the 
generating functions $Q_{132}^{(k,0,\emptyset,0)}(t,x)$ and 
$Q_{132}^{(\emptyset,0,k,0)}(t,x)$. In  \cite{kitremtieII}, we studied the generating functions $Q_{n,132}^{(k,0,\ell,0)}(t, x)$, 
$Q_{n,132}^{(k,0,0,\ell)}(t,x)=Q_{n,132}^{(k,\ell,0,0)}(t,x)$, 
$Q_{n,132}^{(0,k,\ell,0)}(t,x)=Q_{n,132}^{(0,0,\ell,k)}(t,x)$, and 
$Q_{n,132}^{(0,k,0,\ell)}(t,x)$, where $k,\ell \geq 1$. We also showed 
that sequences of the form $(Q_{n,132}^{(a,b,c,d)}(x)|_{x^r})_{n \geq s}$ 
count a variety of combinatorial objects that appear 
in the {\em On-line Encyclopedia of Integer Sequences} (OEIS) \cite{oeis}.
Thus, our results gave new combinatorial interpretations 
of certain classical sequences such as the Fine numbers and the Fibonacci 
numbers  as well as provided certain sequences that appear in the OEIS 
with a combinatorial interpretation where none had existed before. Another particular result of our studies in \cite{kitremtie} is enumeration of permutations avoiding simultaneously the patterns 132 and 1234, while in \cite{kitremtieII},
 we made a link to the {\em Pell numbers}.

The main goal of this paper is to continue the study of 
 $Q_{132}^{(a,b,c,d)}(t,x)$ 
and combinatorial interpretations of sequences of the form 
$(Q_{n,132}^{(a,b,c,d)}(x)|_{x^r})_{n \geq s}$ 
in the case where $a,b,c,d \in \mathbb{N}$ and at least three of 
these parameters are non-zero.

Next we list the key results from 
 \cite{kitremtie} and \cite{kitremtieII}  which we need in this paper.  

\begin{theorem}\label{thm:Qk000} (\cite[Theorem 4]{kitremtie})
\begin{equation*}\label{eq:Q0000}
Q_{132}^{(0,0,0,0)}(t,x) =  C(xt) = \frac{1-\sqrt{1-4xt}}{2xt}
\end{equation*}
and, for $k \geq 1$, 
\begin{equation*}\label{Qk000}
Q_{132}^{(k,0,0,0)}(t,x) = \frac{1}{1-tQ_{132}^{(k-1,0,0,0)}(t,x)}.
\end{equation*}
Hence 
\begin{equation*}\label{eq:Q100(0)}
Q_{132}^{(1,0,0,0)}(t,0) = \frac{1}{1-t}
\end{equation*}
and, for $k \geq 2$, 
\begin{equation*}\label{x=0Qk000}
Q_{132}^{(k,0,0,0)}(t,0) = \frac{1}{1-tQ_{132}^{(k-1,0,0,0)}(t,0)}.
\end{equation*}
\end{theorem}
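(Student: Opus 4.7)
My plan is to use the classical decomposition of a $132$-avoiding permutation by the position of its largest entry, and then to track how matches of $\MMP(k,0,0,0)$ redistribute over the two resulting blocks. The base case $Q_{132}^{(0,0,0,0)}(t,x)=C(xt)$ needs no decomposition: when all four parameters are zero every position of every $\sg\in S_n$ matches trivially, so $\mmp^{(0,0,0,0)}(\sg)=n$ and $Q_{n,132}^{(0,0,0,0)}(x)=C_n x^n$, giving the stated generating function.

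For $k\geq 1$ and $n\geq 1$, I would write $\sg\in S_n(132)$ as $\sg=\alpha\,n\,\beta$ where $n$ occupies position $i$. Since $\sg$ avoids $132$, every entry of $\alpha$ must exceed every entry of $\beta$, so $\alpha$ uses exactly $\{n-i+1,\dots,n-1\}$ and $\beta$ uses $\{1,\dots,n-i\}$, with $\red[\alpha]\in S_{i-1}(132)$ and $\red[\beta]\in S_{n-i}(132)$ chosen independently. The heart of the argument is the case analysis for a match of $\MMP(k,0,0,0)$: if $\sg_j$ lies in $\alpha$, the entries of $\sg$ greater than $\sg_j$ and to its right are exactly the such entries within $\alpha$ together with $n$ (since $\beta$ lies entirely below $\alpha$), so $\sg_j$ matches $\MMP(k,0,0,0)$ in $\sg$ iff it matches $\MMP(k-1,0,0,0)$ in $\red[\alpha]$; the entry $n$ itself contributes no match when $k\geq 1$; and if $\sg_j$ lies in $\beta$ every entry to its right already lies in $\beta$, so the matching condition is insensitive to $\alpha$ and $n$ and is recorded by the statistic on $\red[\beta]$. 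Summing over $i$ yields the convolution
\begin{equation*}
Q_{n,132}^{(k,0,0,0)}(x)=\sum_{i=1}^{n}Q_{i-1,132}^{(k-1,0,0,0)}(x)\,Q_{n-i,132}^{(k,0,0,0)}(x),
\end{equation*}
and multiplying by $t^n$ and summing over $n\geq 1$ produces $Q_{132}^{(k,0,0,0)}(t,x)-1=t\,Q_{132}^{(k-1,0,0,0)}(t,x)\,Q_{132}^{(k,0,0,0)}(t,x)$, which rearranges to the asserted recurrence.

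For the $x=0$ statements, the recurrence for $k\geq 2$ is obtained simply by setting $x=0$ in the recurrence just established. For the base case $k=1$, I would argue directly that $\mmp^{(1,0,0,0)}(\sg)=0$ forces every entry of $\sg$ to be a right-to-left maximum, which singles out the unique decreasing permutation $n(n-1)\cdots 1\in S_n(132)$; hence $Q_{n,132}^{(1,0,0,0)}(0)=1$ for every $n\geq 0$ and $Q_{132}^{(1,0,0,0)}(t,0)=\sum_{n\geq 0}t^n=1/(1-t)$. (Equivalently, specialize the $k=1$ recurrence at $x=0$ using $Q_{132}^{(0,0,0,0)}(t,0)=1$.) There is no real obstacle; the only delicate point is the asymmetric transfer of the $\MMP(k,0,0,0)$ condition from $\sg$ to $\alpha$, where the presence of $n$ on the right decrements the quadrant-I threshold from $k$ to $k-1$, versus to $\beta$, where $n$ lies outside the relevant half-plane and $k$ is preserved, and once that asymmetry is correctly accounted for the rest is bookkeeping.
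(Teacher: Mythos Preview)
Your argument is correct and is exactly the decomposition-by-the-position-of-$n$ technique that underlies all the recursions in this paper (and, presumably, the proof in \cite{kitremtie} from which Theorem~\ref{thm:Qk000} is quoted without proof here). Your handling of the asymmetric transfer---$k$ drops to $k-1$ on $A_i(\sg)$ because $n$ sits in quadrant~I for each element of $A_i(\sg)$, while $k$ is preserved on $B_i(\sg)$---matches the paper's own reasoning in Section~2 and later sections, and your treatment of the $x=0$ specializations is fine.
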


\begin{theorem}\label{thm:Q00k0} (\cite[Theorem 8]{kitremtie}) 
For $k \geq 1$, 
\begin{align}\label{gf00k0}
Q_{132}^{(0,0,k,0)}(t,x)&=\frac{1+(tx-t)(\sum_{j=0}^{k-1}C_jt^j) - 
\sqrt{(1+(tx-t)(\sum_{j=0}^{k-1}C_jt^j))^2 -4tx}}{2tx}\nonumber\\
&=\frac{2}{1+(tx-t)(\sum_{j=0}^{k-1}C_jt^j) + \sqrt{(1+(tx-t)(\sum_{j=0}^{k-1}C_jt^j))^2 -4tx}}\notag
\end{align}
and  
\begin{equation*}
Q_{132}^{(0,0,k,0)}(t,0) = \frac{1}{1-t(C_0+C_1 t+\cdots +C_{k-1}t^{k-1})}.
\end{equation*}
\end{theorem}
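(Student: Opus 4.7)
The plan is to set up a standard recurrence for $Q(t,x) := Q_{132}^{(0,0,k,0)}(t,x)$ based on the position of the maximum entry in a $132$-avoiding permutation, convert it to a quadratic functional equation, and solve by the quadratic formula.

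Every nonempty $\sigma \in S_n(132)$ admits the well-known unique decomposition $\sigma = \alpha\, n\, \beta$, where $n$ sits at some position $i$, $\alpha \in S_{i-1}(132)$ uses the values $\{n-i+1,\dots,n-1\}$, and $\beta \in S_{n-i}(132)$ uses $\{1,\dots,n-i\}$; avoidance of $132$ forces every entry of $\alpha$ to exceed every entry of $\beta$. With respect to this decomposition, matches of $\MMP(0,0,k,0)$ behave very cleanly: for an entry lying in $\alpha$, the points of $\sigma$ that are simultaneously to its left and smaller are exactly the corresponding points within $\alpha$ (all of $\beta$ is to the right); for an entry lying in $\beta$, such points are exactly the corresponding points within $\beta$ (all of $\alpha$ together with $n$ lies above it); and the apex $n$ contributes a match iff $|\alpha| = i-1 \geq k$. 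This gives the recurrence
\[
Q_{n,132}^{(0,0,k,0)}(x) = \sum_{i=1}^{\min(k,n)} Q_{i-1,132}^{(0,0,k,0)}(x)\, Q_{n-i,132}^{(0,0,k,0)}(x) + x\sum_{i=k+1}^{n} Q_{i-1,132}^{(0,0,k,0)}(x)\, Q_{n-i,132}^{(0,0,k,0)}(x).
\]

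The clean step that makes the functional equation collapse is the observation that for $j \leq k-1$ no entry of any $\sigma \in S_j(132)$ can possibly match $\MMP(0,0,k,0)$ (there simply aren't $k$ positions to its left), so $Q_{j,132}^{(0,0,k,0)}(x) = C_j$ for such $j$. Writing $P(t) := \sum_{j=0}^{k-1} C_j t^j$, multiplying the recurrence by $t^n$, summing over $n \geq 1$, and using $\sum_{j \geq k} t^j Q_{j,132}^{(0,0,k,0)}(x) = Q(t,x) - P(t)$, I obtain
\[
Q(t,x) - 1 = t\, P(t)\, Q(t,x) + x\, t\, Q(t,x)\bigl(Q(t,x) - P(t)\bigr),
\]
which rearranges to the quadratic $x t\, Q(t,x)^2 - \bigl(1 + (tx-t)P(t)\bigr)Q(t,x) + 1 = 0$.

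Applying the quadratic formula and selecting the branch analytic at $t=0$ (the minus sign in front of the square root, forced by $Q(0,x) = 1$) yields the first displayed expression, and rationalizing the numerator using $(B-\sqrt{B^2-4xt})(B+\sqrt{B^2-4xt}) = 4xt$ with $B = 1 + (tx-t)P(t)$ yields the second. Finally, setting $x = 0$ in the functional equation collapses it to $Q(t,0) = 1 + tP(t)Q(t,0)$, giving the stated closed form $1/(1 - tP(t))$. The only step needing real care is the bookkeeping around the split at $i = k$: the $i \leq k$ portion of the sum contracts to $tP(t)Q(t,x)$ precisely because the $\alpha$-factor degenerates to $C_{i-1}$ on that range, and this is exactly the simplification that keeps the equation quadratic rather than more complicated.
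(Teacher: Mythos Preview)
Your argument is correct. The paper does not actually prove this theorem here---it is quoted verbatim from the earlier paper \cite{kitremtie}---so there is no in-paper proof to compare against; but your derivation via the decomposition $\sigma=\alpha\,n\,\beta$, the observation $Q_{j,132}^{(0,0,k,0)}(x)=C_j$ for $j\le k-1$, and the resulting quadratic $xt\,Q^2-(1+(tx-t)P(t))Q+1=0$ is exactly the standard mechanism used throughout this series of papers, and it recovers both displayed forms and the $x=0$ specialization cleanly.
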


\begin{theorem}\label{thm:Qk0l0}(\cite[Theorem 5]{kitremtieII})  For all $k, \ell \geq 1$, 
\begin{equation}\label{k0l0gf}
Q_{132}^{(k,0,\ell,0)}(t,x) = 
\frac{1}{1-t Q_{132}^{(k-1,0,\ell,0)}(t,x)}.
\end{equation}
\end{theorem}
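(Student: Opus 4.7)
The plan is to use the standard decomposition of a 132-avoiding permutation by the position of its maximum entry, and to track how the statistic $\mmp^{(k,0,\ell,0)}$ distributes across the two resulting blocks. Given $\sg \in S_n(132)$ with $\sg_i = n$, the 132-avoidance condition forces every entry in positions $1, \ldots, i-1$ to exceed every entry in positions $i+1, \ldots, n$. Hence $\{\sg_1, \ldots, \sg_{i-1}\} = \{n-i+1, \ldots, n-1\}$ and $\{\sg_{i+1}, \ldots, \sg_n\} = \{1, \ldots, n-i\}$, and the reductions $\sg' := \red[\sg_1 \ldots \sg_{i-1}]$ and $\sg'' := \red[\sg_{i+1} \ldots \sg_n]$ are themselves 132-avoiding permutations of lengths $i-1$ and $n-i$.

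The core step is the statistic identity
\[
\mmp^{(k,0,\ell,0)}(\sg) = \mmp^{(k-1,0,\ell,0)}(\sg') + \mmp^{(k,0,\ell,0)}(\sg''),
\]
which I would prove by considering each entry separately. The entry $n$ itself contributes $0$, since nothing can lie in its Quadrant I and we require $k \geq 1$. For an entry $\sg_j$ with $j < i$, the entries in positions $i+1, \ldots, n$ are all smaller than $\sg_j$, so the Quadrant I count at $\sg_j$ equals the Quadrant I count of the corresponding entry of $\sg'$ plus one (the extra contribution of $n$), while the Quadrant III count is entirely internal to the left block and agrees with the Quadrant III count in $\sg'$. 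Thus $\sg_j$ matches $\MMP(k,0,\ell,0)$ in $\sg$ iff the corresponding entry of $\sg'$ matches $\MMP(k-1,0,\ell,0)$ in $\sg'$. For an entry $\sg_j$ with $j > i$, every entry in positions $1, \ldots, i$ exceeds $\sg_j$, so no entry of the left block lies in Quadrant I or Quadrant III at $\sg_j$; the counts therefore agree exactly with those of the corresponding entry of $\sg''$, and the matching condition carries over verbatim.

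Summing the resulting recurrence
\[
Q_{n,132}^{(k,0,\ell,0)}(x) = \sum_{i=1}^n Q_{i-1,132}^{(k-1,0,\ell,0)}(x) \, Q_{n-i,132}^{(k,0,\ell,0)}(x)
\]
over $n \geq 1$, after multiplying by $t^n$, yields a Cauchy product that gives
\[
Q_{132}^{(k,0,\ell,0)}(t,x) - 1 = t \, Q_{132}^{(k-1,0,\ell,0)}(t,x) \, Q_{132}^{(k,0,\ell,0)}(t,x),
\]
and solving for $Q_{132}^{(k,0,\ell,0)}(t,x)$ produces \eqref{k0l0gf}. The main technical obstacle is the bookkeeping in the second paragraph: verifying that the Quadrant I count at each left-block entry gains exactly one from the inserted maximum while its Quadrant III count is unaffected, and that no left-block entry enters either Quadrant I or Quadrant III of any right-block entry. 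The hypothesis $k \geq 1$ is essential because it is precisely what allows the extra $+1$ in Quadrant I to be absorbed as a decrement of the first parameter.
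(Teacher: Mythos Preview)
Your proof is correct. Note, however, that this theorem is merely \emph{cited} from \cite{kitremtieII} in the present paper and is not re-proved here, so there is no in-paper proof to compare against. That said, your argument is exactly the standard position-of-$n$ decomposition used repeatedly in this paper (see, e.g., the derivation of Theorem~\ref{thm:Qklm0}): the left block $A_i(\sg)$ contributes $Q_{i-1,132}^{(k-1,0,\ell,0)}(x)$ because $\sg_i=n$ sits in Quadrant~I of every left-block entry while $B_i(\sg)$ lies entirely in Quadrant~IV, and the right block $B_i(\sg)$ contributes $Q_{n-i,132}^{(k,0,\ell,0)}(x)$ because $A_i(\sg)\cup\{n\}$ lies entirely in Quadrant~II of every right-block entry. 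Your bookkeeping is accurate and the passage to the generating-function identity via the Cauchy product is routine.
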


\begin{theorem} \label{thm:k00l}(\cite[Theorem 11]{kitremtieII})
For all $k, \ell \geq 1$, 
\begin{multline}\label{Qk00lgf-}
Q_{132}^{(k,0,0,\ell)}(t,x) = \\
\frac{C_\ell t^\ell + \sum_{j=0}^{\ell -1} C_j t^j (1 -tQ_{132}^{(k-1,0,0,0)}(t,x)
+t(Q_{132}^{(k-1,0,0,\ell-j)}(t,x)-\sum_{s=0}^{\ell -j -1}C_s t^s))}{1-tQ_{132}^{(k-1,0,0,0)}(t,x)}.
\end{multline}
\end{theorem}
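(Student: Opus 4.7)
The proof follows the standard structural decomposition for $132$-avoiding permutations that underlies the paper's earlier theorems. The plan is to condition on the position of the maximum: for $\sg \in S_n(132)$ with $\sg_i = n$, the $132$-avoidance forces $\sg = AnB$ where $A \in S_{i-1}(132)$ uses the letters $\{n-i+1,\dots,n-1\}$ and $B \in S_{n-i}(132)$ uses the letters $\{1,\dots,n-i\}$. From this I will read off how $\MMP(k,0,0,\ell)$-matches distribute across $A$, $n$, and $B$.

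The key bookkeeping step: for $\sg_j \in A$, quadrant~I relative to $(j,\sg_j)$ in $\sg$ differs from that in $A$ only by the single entry $n$, while quadrant~IV picks up every entry of $B$. Thus, writing $b = |B|$, an entry of $A$ matches $\MMP(k,0,0,\ell)$ in $\sg$ iff it matches $\MMP(k-1,0,0,\max(\ell-b,0))$ in $A$. The entry $n$ itself has nothing in quadrant~I, so (since $k\ge 1$) it contributes nothing. For $\sg_j \in B$, both quadrants I and IV relative to $\sg$ coincide with those relative to $B$, so its contribution is just $\mmp^{(k,0,0,\ell)}(B)$. Moreover, when $b<\ell$, no entry of $B$ can possibly have $\ell$ smaller entries to its right, so in that regime the inner sum over $B$ collapses to $C_b$. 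Summing over $n\ge 1$ and the position of $n$ (equivalently, over $a=i-1$ and $b=n-i$) and splitting on whether $b\ge \ell$ or $b<\ell$ yields the functional equation
\begin{equation*}
Q_{132}^{(k,0,0,\ell)}(t,x) = 1 + tQ_{132}^{(k-1,0,0,0)}(t,x)\Bigl(Q_{132}^{(k,0,0,\ell)}(t,x) - \sum_{b=0}^{\ell-1} C_b t^b\Bigr) + t\sum_{b=0}^{\ell-1} C_b t^b \, Q_{132}^{(k-1,0,0,\ell-b)}(t,x),
\end{equation*}
which I solve for $Q_{132}^{(k,0,0,\ell)}(t,x)$ exactly as in Theorems~\ref{thm:Qk000} and \ref{thm:Qk0l0}.

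The last step is a cosmetic rewrite to match the stated form of the numerator. Specifically, I need to check that the constant $1$ in the solved numerator equals $C_\ell t^\ell + \sum_{j=0}^{\ell-1} C_j t^j(1 - t\sum_{s=0}^{\ell-j-1} C_s t^s)$, which is exactly the Catalan convolution identity $C_m = \sum_{j=0}^{m-1} C_j C_{m-j-1}$ applied coefficient-by-coefficient in $t^m$ for $1 \le m \le \ell$ (with the $t^0$ coefficient being $C_0 = 1$). After this rewrite, factoring the $\sum_{j=0}^{\ell-1} C_j t^j$ terms together produces exactly \eqref{Qk00lgf-}.

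I expect the main obstacle to be purely organisational rather than conceptual: making sure the case $b<\ell$ is handled correctly (where the contribution from $A$ depends on $b$, forcing the $Q_{132}^{(k-1,0,0,\ell-j)}$ terms inside the sum) and that the boundary cases $b=0$ and $b=\ell-1$ of that sum behave as expected. Once the decomposition is set up cleanly, the solve-and-simplify is routine and the Catalan identity closes the argument.
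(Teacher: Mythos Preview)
Your proposal is correct and follows exactly the structural decomposition that the paper (and, by citation, \cite{kitremtieII}) uses throughout: split on the position $i$ of $n$, observe that entries of $A_i(\sg)$ gain one quadrant-I point (namely $n$) and $|B_i(\sg)|$ quadrant-IV points while entries of $B_i(\sg)$ see no change in quadrants I or IV, then sum and solve the resulting linear equation for $Q_{132}^{(k,0,0,\ell)}(t,x)$. Your final cosmetic step is also right: the identity $1 = C_\ell t^\ell + \sum_{j=0}^{\ell-1} C_j t^j - t\sum_{j=0}^{\ell-1} C_j t^j \sum_{s=0}^{\ell-j-1} C_s t^s$ is precisely the Catalan convolution $C_m = \sum_{j=0}^{m-1} C_j C_{m-1-j}$ for $1\le m\le\ell$, which converts your solved numerator into the stated form \eqref{Qk00lgf-}.
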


\begin{theorem}\label{thm:Q0kl0}(\cite[Theorem 14]{kitremtieII})
 For all $k,\ell \geq 1$, 
\begin{multline}\label{Q0kl0gf}
Q_{132}^{(0,k,\ell,0)}(t,x) = \\
\frac{C_{k-1} t^{k-1} + \sum_{j=0}^{k-2} C_j t^j \left(1 -tQ_{132}^{(0,0,\ell,0)}(t,x) 
+t(Q_{132}^{(0,k-i-1,\ell,0)}(t,x)-\sum_{s=0}^{k-i-2}C_s t^s)\right)}{1-tQ_{132}^{(0,0,\ell,0)}(t,x)}.
\end{multline}
\end{theorem}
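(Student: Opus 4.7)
The plan is to establish the formula by the standard first-position decomposition of a $132$-avoiding permutation and then carefully track matches of $\MMP(0,k,\ell,0)$ through that decomposition. Given $\sg \in S_n(132)$, let $i$ be the position of the maximum value $n$, and write $\sg = \A\, n\, \B$ with $|\A| = i-1$ and $|\B| = n-i$. The $132$-avoidance forces every entry of $\A$ to exceed every entry of $\B$, so $\red[\A]$ and $\red[\B]$ independently range over $S_{i-1}(132)$ and $S_{n-i}(132)$.

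The second step is to classify where matches of $\MMP(0,k,\ell,0)$ in $\sg$ can occur. The entry $n$ itself contributes nothing since quadrant II above $n$ is empty and $k \geq 1$. For an entry in $\A$, every element of $\B$ lies strictly below and to the right, hence in quadrant IV, while $n$ lies in quadrant I; thus quadrants II and III only see other entries of $\A$, and matches in $\A$ are counted exactly by $\mmp^{(0,k,\ell,0)}(\A)$. For an entry $\sg_j$ in $\B$, every one of the $i$ entries of $\A\cup\{n\}$ sits in quadrant II, giving an automatic boost of $i$ to the quadrant II count. Splitting cases: if $i \geq k$ then $\sg_j$ matches $\MMP(0,k,\ell,0)$ in $\sg$ iff it matches $\MMP(0,0,\ell,0)$ in $\B$; if $i < k$, it matches iff it matches $\MMP(0,k-i,\ell,0)$ in $\B$.

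Introducing $A_m(x) = \sum_{\A \in S_m(132)} x^{\mmp^{(0,k,\ell,0)}(\A)}$ and observing that a point needs at least $k$ strict predecessors to match, so $A_m(x) = C_m$ for every $m \leq k$, summing the case split over all $n$, $i$, $\A$, and $\B$ yields the functional equation
\begin{equation*}
Q_{132}^{(0,k,\ell,0)}(t,x)\bigl(1 - tQ_{132}^{(0,0,\ell,0)}(t,x)\bigr) = 1 - tQ_{132}^{(0,0,\ell,0)}(t,x)\sum_{j=0}^{k-2} C_j t^j + \sum_{j=0}^{k-2} C_j t^{j+1}\, Q_{132}^{(0,k-j-1,\ell,0)}(t,x).
\end{equation*}

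The last step is purely algebraic rearrangement into the displayed form. The tails $-\sum_{s=0}^{k-j-2}C_s t^s$ appearing inside each factor on the right of the theorem statement contribute a double sum $\sum_{j=0}^{k-2}\sum_{s=0}^{k-j-2} C_j C_s t^{j+s+1}$, which collapses via the Catalan convolution $\sum_{j=0}^{q} C_j C_{q-j} = C_{q+1}$ to $\sum_{r=1}^{k-1} C_r t^r$; this term together with the isolated $C_{k-1} t^{k-1}$ and the truncation of $\sum_{j=0}^{k-2} C_j t^j$ telescope to match the right-hand side of my functional equation exactly, after which dividing by $1 - tQ_{132}^{(0,0,\ell,0)}(t,x)$ recovers the theorem. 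The combinatorial decomposition itself is essentially forced, so the main obstacle is really only the index bookkeeping in this Catalan-convolution reconciliation between my compact form and the more structurally suggestive form stated in the theorem.
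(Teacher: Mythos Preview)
Your proof is correct. The paper does not actually prove this theorem---it is quoted from \cite{kitremtieII}---but your argument via the position-of-$n$ decomposition $\sigma = A\,n\,B$, with the case split on whether $i<k$ or $i\ge k$ to determine how the quadrant~II requirement passes to $B$, is exactly the method used throughout the present paper (see, e.g., the derivation of Theorem~\ref{thm:Qklm0}) and almost certainly the one in \cite{kitremtieII}. Your Catalan-convolution bookkeeping reconciling the compact functional equation with the displayed form is also correct; note that the displayed formula in the theorem has a typographical slip (the inner index $i$ should be $j$), which you have silently and correctly repaired.
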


\begin{theorem}\label{thm:Q0k0l}(\cite[Theorem 17]{kitremtieII}) For all $k,\ell \geq 1$, 
\begin{equation}\label{Q0k0lgf2}
Q_{132}^{(0,k,0,\ell)}(t,x) = \frac{\Phi_{k,\ell}(t,x)}{1-t}
\end{equation}
where \\
$\displaystyle 
\Phi_{k,\ell}(t,x) =  \sum_{j=0}^{k+\ell -1}C_jt^j -\sum_{j=0}^{k+\ell-2}C_jt^{j+1}
+t\left(\sum_{j=0}^{k-2} C_j t^j \left(Q_{132}^{(0,k,0,\ell-j-1)}(t,x) - 
\sum_{s=0}^{k-j-2} C_s t^s\right)\right) + $ \\
$\displaystyle  t \left(Q_{132}^{(0,k,0,0)}(t,x) - 
\sum_{u=0}^{k-1} C_u t^u\right)\left(Q_{132}^{(0,0,0,\ell)}(t,x) - 
\sum_{v=0}^{\ell-1} C_v t^v\right)+ $ \\
$\displaystyle t \left(\sum_{j=1}^{\ell-1} C_j t^j \left(Q_{132}^{(0,k,0,\ell-j)}(t,x) - 
\sum_{w=0}^{k+\ell-j-2} C_w t^w\right)\right)$.
\end{theorem}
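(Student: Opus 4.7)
The plan is to decompose each $\sigma\in S_n(132)$ at the position $i$ of its maximum entry $n$, writing $\sigma=AnB$ with $A=\sigma_1\cdots\sigma_{i-1}$ and $B=\sigma_{i+1}\cdots\sigma_n$. Avoidance of 132 forces every entry of $A$ to exceed every entry of $B$, so $A$ is a 132-avoiding permutation on $\{n-i+1,\ldots,n-1\}$ and $B$ is a 132-avoiding permutation on $\{1,\ldots,n-i\}$. With $p=|A|$ and $q=|B|$, a direct inspection of quadrant counts shows that $n$ itself never matches $\MMP(0,k,0,\ell)$, that a point of $A$ matches in $\sigma$ iff it matches $\MMP(0,k,0,(\ell-q)^{+})$ inside $A$ (its quadrant~IV count gains exactly the $q$ entries of $B$), and that a point of $B$ matches in $\sigma$ iff it matches $\MMP(0,(k-p-1)^{+},0,\ell)$ inside $B$ (its quadrant~II count gains the $p+1$ entries of $A\cup\{n\}$). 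Summing over $n$ and $i$ yields the functional equation
\[
Q_{132}^{(0,k,0,\ell)}(t,x)=1+t\sum_{p,q\geq 0}t^{p+q}\,Q_{p,132}^{(0,k,0,(\ell-q)^{+})}(x)\,Q_{q,132}^{(0,(k-p-1)^{+},0,\ell)}(x),
\]
where $(x)^{+}=\max(x,0)$.

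The factor $\tfrac{1}{1-t}$ arises once we isolate the $q=0$ contribution: when $B$ is empty, the pattern required on $A$ is again $\MMP(0,k,0,\ell)$, producing the self-referential term $t\cdot Q_{132}^{(0,k,0,\ell)}(t,x)$. Moving this to the left-hand side gives $(1-t)\,Q_{132}^{(0,k,0,\ell)}(t,x)=\Phi_{k,\ell}(t,x)$, and it remains to identify the residual sum over $q\geq 1$ with the stated $\Phi_{k,\ell}$. I would split this residue into two regions. For $q\geq\ell$ the $A$-factor becomes $Q^{(0,k,0,0)}$, and a secondary split on whether $p\geq k-1$ or $p\leq k-2$ produces, respectively, the T4 product $t(Q^{(0,k,0,0)}-\sum_u C_u t^u)(Q^{(0,0,0,\ell)}-\sum_v C_v t^v)$ and a T3-type sum featuring $Q^{(0,k-p-1,0,\ell)}$, which can be brought into the stated form $Q^{(0,k,0,\ell-j-1)}$ by invoking Lemma~\ref{sym} and re-indexing. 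For $1\leq q\leq\ell-1$ no permutation of length $q<\ell$ can exhibit $\ell$ lower-right points, so $Q_{q,132}^{(0,(k-p-1)^{+},0,\ell)}(x)=C_q$, and the corresponding piece collapses into the T5 sum $t\sum_{j=1}^{\ell-1}C_j t^j\bigl(Q^{(0,k,0,\ell-j)}-\sum_w C_w t^w\bigr)$.

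The opening polynomial $\sum_{j=0}^{k+\ell-1}C_jt^j-\sum_{j=0}^{k+\ell-2}C_jt^{j+1}$ is exactly $(1-t)$ times the Catalan prefix of $Q^{(0,k,0,\ell)}$, since $Q_{n,132}^{(0,k,0,\ell)}(x)=C_n$ for $n\leq k+\ell$; it accounts for the residual boundary contributions left over after the three truncations above. The principal obstacle is precisely this boundary bookkeeping: each of the cutoffs $k-j-2$, $k-1$, $\ell-1$, and $k+\ell-j-2$ in $\Phi_{k,\ell}$ marks the largest degree at which the corresponding auxiliary $Q$ still agrees with its Catalan prefix, and reconciling them across the three subregions requires a careful telescoping argument. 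Crucially, every auxiliary $Q^{(0,b,0,d)}$ appearing on the right-hand side has $b+d<k+\ell$, so the recursion closes by induction on $k+\ell$, with base cases $Q^{(0,k,0,0)}$ and $Q^{(0,0,0,\ell)}$ supplied by the earlier results of \cite{kitremtie}.
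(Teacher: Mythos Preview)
Your approach is exactly the paper's method. Theorem~\ref{thm:Q0k0l} is only quoted here from \cite{kitremtieII}, but the identical technique is carried out in full for the parallel Theorems~\ref{thm:Q0klm} and~\ref{thm:Qlk0m}: decompose $\sigma\in S_n(132)$ at the position $i$ of $n$, use the fact that $A_i(\sigma)$ sits entirely above $B_i(\sigma)$ to read off the induced patterns on the two blocks, split into the three cases $i<k$, $k\le i\le n-\ell$, $i>n-\ell$, and sum. Your functional equation with $(\cdot)^+$ encodes precisely this, and isolating the $q=0$ term to extract the factor $1/(1-t)$ is exactly how the paper handles the $j=0$ term just before Theorem~\ref{thm:Q0klm}.

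There is one real slip. In the subregion $q\ge\ell$, $p\le k-2$ you correctly find that the $A$-factor collapses to $C_p$ and the $B$-factor is $Q^{(0,k-p-1,0,\ell)}$. You then assert that Lemma~\ref{sym} and a re-indexing turn this into the $Q^{(0,k,0,\ell-j-1)}$ of the printed T3. But Lemma~\ref{sym} says $Q^{(0,b,0,d)}=Q^{(0,d,0,b)}$, so it yields $Q^{(0,k-p-1,0,\ell)}=Q^{(0,\ell,0,k-p-1)}$, not $Q^{(0,k,0,\ell-p-1)}$; no re-indexing repairs this unless $k=\ell$. Comparing with the paper's own derivation of the analogous term in Theorem~\ref{thm:Q0klm}, where the corresponding summand is $Q^{(0,k-1-i,\ell,m)}$ with inner truncation $k+m-i-2$, the expected T3 here should carry $Q^{(0,k-1-j,0,\ell)}$ and truncation $k+\ell-j-2$; the printed $Q^{(0,k,0,\ell-j-1)}$ and bound $k-j-2$ appear to be transcription errors (note also that $\ell-j-1$ becomes negative when $k\ge\ell+2$). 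Your argument proves the intended formula directly, with no need for Lemma~\ref{sym} at that step.
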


As it was pointed out in \cite{kitremtie}, {\em avoidance} of a marked mesh pattern without quadrants containing the empty set can always be expressed in terms of multi-avoidance of (possibly many) classical patterns. Thus, among our results we will re-derive several known facts in permutation patterns theory. However, our main goals are more ambitious aimed at finding  distributions in question.

\section{$Q_{n,132}^{(k,0,m,\ell)}(x)=Q_{n,132}^{(k,\ell,m,0)}(x)$
 where $k,\ell,m \geq 1$}

By Lemma \ref{sym}, we know that 
$Q_{n,132}^{(k,0,m,\ell)}(x)= Q_{n,132}^{(k,\ell,m,0)}(x)$. 
Thus, we will only consider $Q_{n,132}^{(k,\ell,m,0)}(x)$ in 
this section.

Throughout this paper, we shall classify the $132$-avoiding permutations 
$\sg = \sg_1 \ldots \sg_n$ by the position of $n$ 
in $\sg$. That is, let 
$S^{(i)}_n(132)$ denote the set of $\sg \in S_n(132)$ such 
that $\sg_i =n$. Clearly each $\sg \in  S_n^{(i)}(132)$ has the structure 
pictured in Figure \ref{fig:basic2}. That is, in the graph of 
$\sg$, the elements to the left of $n$, $A_i(\sg)$, have 
the structure of a $132$-avoiding permutation, the elements 
to the right of $n$, $B_i(\sg)$, have the structure of a 
$132$-avoiding permutation, and all the elements in 
$A_i(\sg)$ lie above all the elements in 
$B_i(\sg)$.  It is well-known that the number of $132$-avoiding 
permutations in $S_n$ is the {\em Catalan number} 
$C_n = \frac{1}{n+1} \binom{2n}{n}$ and the generating 
function for the $C_n$'s is given by 
\begin{equation*}\label{Catalan}
C(t) = \sum_{n \geq 0} C_n t^n = \frac{1-\sqrt{1-4t}}{2t}=
\frac{2}{1+\sqrt{1-4t}}.
\end{equation*}

\fig{basic2}{The structure of $132$-avoiding permutations.}

Suppose that $n \geq \ell$.  
It is clear that $n$ can never match 
the pattern $\MMP(k,\ell,m,0)$ for $k,m \geq 1$ in any 
$\sg \in S_n(132)$.    
For $1 \leq i \leq n$,  it is easy to see that as we sum 
over all the permutations $\sg$ in $S_n^{(i)}(132)$, our choices 
for the structure for $A_i(\sg)$ will contribute a factor 
of $Q_{i-1,132}^{(k-1,\ell,m,0)}(x)$ to $Q_{n,132}^{(k,\ell,m,0)}(x)$. 
Similarly, our choices 
for the structure for $B_i(\sg)$ will contribute a factor 
of $Q_{n-i,132}^{(k,\ell-i,m,0)}(x)$ to $Q_{n,132}^{(k,\ell,m,0)}(x)$ if 
$i < \ell$ since $\sg_1 \ldots \sg_i$ will automatically be 
in the second quadrant relative to the coordinate system 
with the origin at $(s,\sg_s)$ for any $s > i$.  However if $i \geq \ell$, 
then our choices 
for the structure for $B_i(\sg)$ will contribute a factor 
of $Q_{n-i,132}^{(k,0,m,0)}(x)$ to $Q_{n,132}^{(k,\ell,m,0)}(x)$. It follows that for $n \geq \ell$, 
\begin{equation*}
Q_{n,132}^{(k,\ell,m,0)}(x) = \sum_{i=1}^{\ell -1} 
Q_{i-1,132}^{(k-1,\ell,m,0)}(x)Q_{n-i,132}^{(k,\ell-i,m,0)}(x) + 
\sum_{i=\ell}^{n} Q_{i-1,132}^{(k-1,\ell,m,0)}(x)
Q_{n-i,132}^{(k,0,m,0)}(x).
\end{equation*}
Note that for $i < \ell$, $Q_{i-1,132}^{(k-1,\ell,m,0)}(x) =C_{i-1}$. 
Thus, for $n \geq \ell$,
\begin{equation}\label{Q-klm02}
Q_{n,132}^{(k,\ell,m,0)}(x) =\\ \sum_{i=1}^{\ell -1} 
C_{i-1}Q_{n-i,132}^{(k,\ell-i,m,0)}(x) + 
\sum_{i=\ell}^{n} Q_{i-1,132}^{(k-1,\ell,m,0)}(x)
Q_{n-i,132}^{(k,0,m,0)}(x).
\end{equation}

Multiplying both sides of (\ref{Q-klm02}) by $t^n$ and 
summing for $n \geq \ell$, we 
see that for $k, \ell \geq 1$, 
\begin{eqnarray*} 
Q_{132}^{(k,\ell,m,0)}(t,x) &=& \sum_{j=0}^{\ell -1} C_jt^j+
\sum_{i=1}^{\ell -1} C_{i-1}t^i \sum_{u \geq \ell -i}
Q_{u,132}^{(k,\ell-i,m,0)}(x)t^u +  \\
&& t \sum_{n \geq \ell} 
\sum_{i=1}^n  Q_{i-1,132}^{(k-1,\ell,m,0)}(x)t^{i-1} 
Q_{n-i,132}^{(k,0,m,0)}(x)t^{n-i} \nonumber \\
&=& \sum_{j=0}^{\ell -1} C_jt^j+ \sum_{i=1}^{\ell -1} C_{i-1}t^i 
\left( Q_{132}^{(k,\ell-i,m,0)}(t,x) - 
\sum_{j=0}^{\ell -i -1}C_jt^j\right) +  \nonumber \\
&& t Q_{132}^{(k,0,m,0)}(t,x)\left(  
Q_{132}^{(k-1,\ell,m,0)}(t,x)- \sum_{s=0}^{\ell -2} C_s t^s\right)  
\nonumber \\
&=& C_{\ell -1} t^{\ell -1} + t Q_{132}^{(k,0,m,0)}(t,x)Q_{132}^{(k-1,\ell,m,0)}(t,x) + \nonumber \\
&&\sum_{s =0}^{\ell -2}C_s t^s \left(1+  t Q_{132}^{(k,\ell-1-s,m,0)}(t,x) -
tQ_{132}^{(k,0,m,0)}(t,x) - t \sum_{j=0}^{\ell -2 -s} C_jt^j\right). \nonumber
\end{eqnarray*}

Thus, we have the following theorem. 
\begin{theorem}\label{thm:Qklm0}
\begin{multline}\label{eq:Qklm0}
Q_{132}^{(k,\ell,m,0)}(t,x) = C_{\ell -1}t^{\ell -1}+  
t Q_{132}^{(k,0,m,0)}(t,x)Q_{132}^{(k-1,\ell,m,0)}(t,x) + \\
\sum_{s =0}^{\ell -2}C_s t^s \left(1+ t Q_{132}^{(k,\ell-1-s,m,0)}(t,x) -
tQ_{132}^{(k,0,m,0)}(t,x) - t \sum_{j=0}^{\ell -2 -s} C_jt^j\right).
\end{multline}
\end{theorem}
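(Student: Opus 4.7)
The plan is to condition on the position $i$ of $n$ in each $\sg\in S_n(132)$, writing $\sg=A_i(\sg)\, n\, B_i(\sg)$ using the standard 132-decomposition in which every entry of $A_i(\sg)$ exceeds every entry of $B_i(\sg)$ and both halves are, up to relabeling, 132-avoiding. This is the setup that drives Theorems \ref{thm:Qk000}--\ref{thm:Q0k0l}; the only new combinatorial work is to see how each $\MMP(k,\ell,m,0)$-match transfers into $A_i(\sg)$ or $B_i(\sg)$.

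First I would analyze the local contribution at each position. Because $k,m\ge 1$, the maximum $n$ itself never matches, as its quadrant I is empty. For a point $\sg_j\in A_i(\sg)$, the element $n$ occupies quadrant I and all of $B_i(\sg)$ sits in quadrant IV; since the condition $d=0$ is vacuous, $\sg_j$ matches $\MMP(k,\ell,m,0)$ in $\sg$ iff it matches $\MMP(k-1,\ell,m,0)$ in $A_i(\sg)$. For a point $\sg_j\in B_i(\sg)$, the element $n$ together with every entry of $A_i(\sg)$ sits in quadrant II, giving $i$ free points there, while the other three quadrants see only entries of $B_i(\sg)$; so $\sg_j$ matches $\MMP(k,\ell,m,0)$ in $\sg$ iff it matches $\MMP(k,0,m,0)$ in $B_i(\sg)$ when $i\ge\ell$, and iff it matches $\MMP(k,\ell-i,m,0)$ in $B_i(\sg)$ when $i<\ell$. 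Multiplying these local counts over the two halves and summing over $i$ produces a two-piece recurrence for $Q_{n,132}^{(k,\ell,m,0)}(x)$ valid whenever $n\ge\ell$.

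For the small-$n$ boundary, when $n<\ell$ no entry has $\ell$ other entries to its left, so no matches occur and $Q_{n,132}^{(k,\ell,m,0)}(x)=C_n$; in particular $Q_{i-1,132}^{(k-1,\ell,m,0)}(x)=C_{i-1}$ for all $i\le\ell-1$, collapsing the $i<\ell$ piece of the recurrence to a Catalan-weighted sum. The final step is to multiply the recurrence by $t^n$, sum over $n\ge\ell$, and add back the prefix $\sum_{j=0}^{\ell-1}C_j t^j$ coming from the trivial small-$n$ terms. Swapping summation order turns the $i<\ell$ piece into a sum of $C_{i-1}t^i$ times the tail of $Q_{132}^{(k,\ell-i,m,0)}(t,x)$ starting at degree $\ell-i$, while the $i\ge\ell$ piece is an honest convolution yielding $t\,Q_{132}^{(k,0,m,0)}(t,x)$ times the tail of $Q_{132}^{(k-1,\ell,m,0)}(t,x)$ starting at degree $\ell-1$. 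Reindexing by $s=\ell-1-i$ in the first piece then gathers all the Catalan corrections into the single sum displayed in \eqref{eq:Qklm0}. The one genuine obstacle is bookkeeping: aligning each clipped series with its correct Catalan prefix so that the final identity emerges in the stated closed form rather than as an algebraically equivalent tangle.
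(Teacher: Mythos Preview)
Your approach is exactly the paper's: the same decomposition by the position of $n$, the same analysis of how matches in $\sg$ transfer to $\MMP(k-1,\ell,m,0)$-matches in $A_i(\sg)$ and to $\MMP(k,\max(\ell-i,0),m,0)$-matches in $B_i(\sg)$, the same observation that $Q_{i-1,132}^{(k-1,\ell,m,0)}(x)=C_{i-1}$ for $i\le\ell-1$, and the same passage to generating functions by summing over $n\ge\ell$. One small slip in the final line: the reindexing that actually lands on \eqref{eq:Qklm0} is $s=i-1$ (so that $C_{i-1}t^i=t\cdot C_s t^s$ and $\ell-i=\ell-1-s$), not $s=\ell-1-i$, but you already flagged the bookkeeping as the only genuine obstacle.
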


Note that since we can compute 
$Q_{132}^{(k,0,m,0)}(t,x)$ by Theorem \ref{thm:Qk0l0} and 
$Q_{132}^{(0,\ell,m,0)}(t,x)$ by Theorem \ref{thm:Q0kl0}, 
we can use (\ref{eq:Qklm0}) to compute $Q_{132}^{(k,\ell,m,0)}(t,x)$ 
for any $k,\ell,m \geq 1$. 

\subsection{Explicit formulas for  $Q^{(k,\ell,m,0)}_{n,132}(x)|_{x^r}$}

It follows from  Theorem \ref{thm:Qklm0} that  
\begin{equation}\label{Qklm0gf0}
Q_{132}^{(k,1,m,0)}(t,x) = 1+ 
t Q_{132}^{(k,0,m,0)}(t,x)Q_{132}^{(k-1,1,m,0)}(t,x)
\end{equation}  
and 
\begin{equation*}\label{Qk2m0gf00}
Q_{132}^{(k,2,m,0)}(t,x) = 1+ 
t Q_{132}^{(k,0,m,0)}(t,x)(Q_{132}^{(k-1,2,m,0)}(t,x)-1) + 
tQ_{132}^{(k,1,m,0)}(t,x).
\end{equation*}

Note that it follows from Theorems \ref{thm:Qk0l0} and \ref{thm:Q0kl0} that 
\begin{eqnarray*}
Q_{132}^{(1,1,1,0)}(t,0) &=&  1 + t Q_{132}^{(1,0,1,0)}(t,0)Q_{132}^{(0,1,1,0)}(t,0)\\
&=& 1+ t\frac{1-t}{1-2t}\frac{1-t}{1-2t}= 
\frac{1-3t+2t^2+t^3}{(1-2t)^2}.  
\end{eqnarray*}
Thus, the generating function of the sequence 
$(Q_{n,132}^{(1,1,1,0)}(0))_{n \geq 1}$ is 
$\left(\frac{1-t}{1-2t} \right)^2$ which is the generating 
function of the sequence A045623 in the OEIS.  The $n$-th term $a_n$ of this sequence 
has many combinatorial interpretations including the number of 
$1$s in all partitions of $n+1$ and the number of 132-avoiding permutations 
of $S_{n+2}$ which contain exactly one occurrence of the pattern 
213.  We note that for a permutation $\sg$ to avoid the pattern 
$\MMP(1,1,1,0)$, it must simultaneously avoid the patterns 
3124, 4123, 1324, and 1423. Thus, the number of permutations 
$\sg \in S_n(132)$ which avoid $\MMP(1,1,1,0)$ is the number 
of permutations in $S_n$ that simultaneously avoid the patterns 
132, 3124, and 4123.

\begin{problem} Find simple bijections between the set of 
permutations $\sg \in S_n(132)$ which avoid  
$\MMP(1,1,1,0)$ and the other combinatorial interpretations of 
the sequence A045623 in the OEIS.
\end{problem}

Note that it follows from Theorem \ref{thm:Qk0l0} and our previous results that  
\begin{eqnarray*}
Q_{132}^{(2,1,1,0)}(t,0) &=&  1 + t Q_{132}^{(2,0,1,0)}(t,0)Q_{132}^{(1,1,1,0)}(t,0) \\
&=& 1+t\frac{1-2t}{1-3t+t^2}\left(\frac{1-3t+2t^2+t^3}{(1-2t)^2}\right)\\
&=& \frac{1-4t+4t^2+t^4}{1-5t+7t^2-2t^3}.
\end{eqnarray*}
The sequence  $(Q_{n,132}^{(2,1,1,0)}(0))_{n \geq 1}$ is the sequence  
A142586 in the OIES which has the generating function 
$\frac{1-3t+2t^2+t^3}{(1-3t+t^2)(1-2t)}$. That is, 
$\frac{1-4t+4t^2+t^4}{1-5t+7t^2-2t^3}-1 = \frac{t(1-3t+2t^2+t^3)}{(1-3t+t^2)(1-2t)}$. This sequence has no listed combinatorial interpretation 
so that we have found a combinatorial interpretation of this sequence. 

Similarly, 
\begin{eqnarray*}
Q_{132}^{(3,1,1,0)}(t,0) &=&  1 + t Q_{132}^{(3,0,1,0)}(t,0)Q_{132}^{(2,1,1,0)}(t,0)\\
&=& 1+t\frac{1-3t+t^2}{1-4t+3t^2}\ \frac{1-4t+4t^2+t^4}{1-5t+7t^2-2t^3}\\
&=& \frac{1-5t+7t^2-2t^3+t^5}{1-6t+11t^2-6t^3}.
\end{eqnarray*}

\vspace{-0.5cm}

\begin{eqnarray*}
Q_{132}^{(1,1,2,0)}(t,0) &=&  1 + t Q_{132}^{(1,0,2,0)}(t,0)Q_{132}^{(0,1,2,0)}(t,0)\\
&=& 1+t\frac{1-t-t^2}{1-2t-t^2}\ \frac{1-t-t^2}{1-2t-t^2}\\ 
&=& \frac{1-3t+3t^3+3t^4+t^5}{(1-2t-t^2)^2}.
\end{eqnarray*}

\vspace{-0.5cm}

\begin{eqnarray*}
Q_{132}^{(2,1,2,0)}(t,0) &=&  1 + t Q_{132}^{(2,0,2,0)}(t,0)Q_{132}^{(1,1,2,0)}(t,0)\\
&=& 1+t\frac{1-2t-t^2}{1-3t+t^3}\ \frac{1-3t+3t^3+3t^4+t^5}{(1-2t-t^2)^2}\\ 
&=& \frac{1-4t+2t^2+4t^3+t^4+2t^5+t^6}{(1-2t-t^2)(1-3t+t^3)}.
\end{eqnarray*}

Using (\ref{Qklm0gf0}) and Theorem \ref{thm:Qk0l0}, 
we have computed the following.

\begin{align*}
&Q_{132}^{(1,1,1,0)}(t,x) = 1+t+2 t^2+5 t^3+(12+2 x) t^4+\left(28+12 x+2 x^2\right) t^5+\ \ \ \ \ \ \ \ \ \ \ \\
&\left(64+48 x+18 x^2+2 x^3\right) t^6+\left(144+160 x+97 x^2+26 x^3+2 x^4\right)
t^7+\\
&\left(320+480 x+408 x^2+184 x^3+36 x^4+2 x^5\right) t^8+\\
&\left(704+1344 x+1479 x^2+958 x^3+327 x^4+48 x^5+2 x^6\right) t^9+\cdots .
\end{align*}

\vspace{-0.5cm}

\begin{align*}
&Q_{132}^{(1,1,2,0)}(t,x) = 1+t+2 t^2+5 t^3+14 t^4+(38+4 x) t^5+\left(102+26 x+4 x^2\right) t^6+\\
&\left(271+120 x+34 x^2+4 x^3\right) t^7+\left(714+470 x+200 x^2+42
x^3+4 x^4\right) t^8+\\
&\left(1868+1672 x+964 x^2+304 x^3+50 x^4+4 x^5\right) t^9+\cdots .
\end{align*}

\vspace{-0.5cm}

\begin{align*}
&Q_{132}^{(1,1,3,0)}(t,x) = 1+t+2 t^2+5 t^3+14 t^4+42 t^5+(122+10 x) t^6+
\ \ \ \ \ \ \ \ \ \ \ \ \ \ \ \ \  \\
&\left(351+68 x+10 x^2\right) t^7+\left(1006+326 x+88 x^2+10 x^3\right) t^8+\\
&\left(2868+1364
x+512 x^2+108 x^3+10 x^4\right) t^9+ \cdots .
\end{align*}

We can explain the highest and second highest coefficients of 
$x$ in these series.  That is, we have the following theorem.

\begin{theorem} \ 
\begin{itemize}
\item[(i)] For all $m \geq 1$ and $n \geq 3+m$, the highest power 
of $x$ that occurs in $Q_{n,132}^{(1,1,m,0)}(x)$ is $x^{n-2-m}$ which 
appears with a coefficient of $2C_m$. 

\item[(ii)] For $n \geq 5$, 
$Q_{n,132}^{(1,1,1,0)}(x)|_{x^{n-4}} = 6 + 2\binom{n-2}{2}$.

\item[(iii)] For $m \geq 2$ and $n \geq 4+ m$
$Q_{n,132}^{(1,1,m,0)}(x)|_{x^{n-3-m}} = 2C_{m+1}+ 8C_m  + 4C_m(n-4)$.  
\end{itemize}
\end{theorem}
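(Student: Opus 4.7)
The plan is to use the recurrence
\[
Q_{n,132}^{(1,1,m,0)}(x) \;=\; \sum_{i=1}^{n} Q_{i-1,132}^{(0,1,m,0)}(x)\, Q_{n-i,132}^{(1,0,m,0)}(x),
\]
obtained by extracting the $t^n$ coefficient of \eqref{Qklm0gf0}. Setting $k=1$ in Theorems~\ref{thm:Qk0l0} and \ref{thm:Q0kl0} gives the identity
\[
Q_{132}^{(1,0,m,0)}(t,x) \;=\; Q_{132}^{(0,1,m,0)}(t,x) \;=\; \frac{1}{1 - t\,Q_{132}^{(0,0,m,0)}(t,x)},
\]
so these two families of polynomials coincide and the sum above is symmetric under $i \leftrightarrow n+1-i$. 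Writing $F_n := Q_{n,132}^{(0,0,m,0)}(x)$ and $G_n := Q_{n,132}^{(1,0,m,0)}(x)$, I would first prove by induction on $n$, via the $132$-decomposition at the position of $n$, that $\deg F_n = n-m$ with leading coefficient $C_m$ for $n\ge m+1$, and $\deg G_n = n-m-1$ with leading coefficient $C_m$ for $n\ge m+2$. In each case only the extreme split ($i=n$) realizes the maximum $x$-degree, and the base case is handled by noting that the only matching configuration forces the rightmost entry to be the maximum of the permutation, leaving a free $132$-avoiding permutation on the remaining $m$ letters.

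Part (i) then follows immediately from the outer recurrence: at degree $x^{n-m-2}$ only the endpoints $i=1$ and $i=n$ contribute, since for any interior $i$ the sum of the two factor-degrees is bounded by $n-2m-3 < n-m-2$, and each endpoint contributes the leading coefficient $C_m$ of $G_{n-1}$, giving $2C_m$ in total.

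For Parts (ii) and (iii), I extract the coefficient of $x^{n-m-3}$. A similar case analysis shows that only $i \in \{1,2,n-1,n\}$ contribute: the endpoints $i=1,n$ each give $B_{n-1} := G_{n-1}\big|_{x^{n-m-3}}$, while $i=2,n-1$ each give the leading coefficient $C_m$ of $G_{n-2}$, so
\[
Q_{n,132}^{(1,1,m,0)}(x)\big|_{x^{n-m-3}} \;=\; 2\,B_{n-1} + 2\,C_m.
\]
Repeating the same coefficient extraction one level deeper in the recurrence for $G_n$ yields $B_n = 2C_m + a_{n-1}$, where $a_j := F_j\big|_{x^{j-m-1}}$ is the second-leading coefficient of $F_j$. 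Thus the entire problem is reduced to determining $a_j$.

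The main obstacle, and the reason (ii) and (iii) have structurally different answers, is the analysis of $a_j$ through the recurrence $F_j = \sum_i F_{i-1}\, x^{[i\ge m+1]}\, F_{j-i}$ coming from the decomposition at the position of $j$. For $m\ge 2$ the estimate $\deg F_{i-1} + \deg F_{j-i} + 1 = j-2m$ is strictly less than $j-m-1$ at every interior split, so only $i\in\{1,j-1,j\}$ contribute, and one obtains the linear recursion $a_j - a_{j-1} = 2C_m$ with initial value $a_{m+1} = C_{m+1}-C_m$ (computed by counting $\sigma \in S_{m+1}(132)$ whose last entry is not the maximum). For $m=1$, however, that degree bound is \emph{tight}, so every interior $i$ contributes a product of leading coefficients equal to $C_1\cdot C_1 = 1$, yielding $a_j - a_{j-1} = j-1$ and hence $a_j = \binom{j}{2}$; this is precisely what produces the $\binom{n-2}{2}$ in Part (ii). Solving the two recurrences and substituting into $B_{n-1}$ yields the closed forms claimed in (ii) and (iii).
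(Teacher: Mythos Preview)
Your proof is correct and follows essentially the same route as the paper: both use the recurrence
\[
Q_{n,132}^{(1,1,m,0)}(x)=\sum_{i=1}^{n}Q_{i-1,132}^{(0,1,m,0)}(x)\,Q_{n-i,132}^{(1,0,m,0)}(x)
\]
and observe that only $i\in\{1,2,n-1,n\}$ can contribute at degree $x^{n-m-3}$. The differences are cosmetic rather than structural. For part~(i) the paper gives a direct combinatorial description of the extremal permutations, whereas you extract the top coefficient from the recurrence; and for parts~(ii)--(iii) the paper simply quotes the values of $Q_{n,132}^{(1,0,m,0)}(x)|_{x^{n-m-2}}$ and $Q_{n,132}^{(0,1,m,0)}(x)|_{x^{n-m-2}}$ from \cite{kitremtieII}, while you re-derive them by descending one level further to $F_n=Q_{n,132}^{(0,0,m,0)}(x)$ and analyzing its second-leading coefficient $a_n$. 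Your observation that $Q_{132}^{(1,0,m,0)}=Q_{132}^{(0,1,m,0)}$ (immediate from the $k=1$ cases of Theorems~\ref{thm:Qk0l0} and~\ref{thm:Q0kl0}) is a nice simplification the paper does not make explicit, and your explanation of why the $m=1$ and $m\ge 2$ cases split---namely that for $m=1$ the interior terms in the $F$-recurrence hit the degree bound exactly---is a genuinely clearer account of where the $\binom{n-2}{2}$ comes from than the paper's citation-based argument.
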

\begin{proof} 

It is easy to see that for the maximum number 
of $\MMP(1,1,m,0)$-matches in a $\sg \in S_n(132)$, the permutation 
must be of the form 
$(n-1)\tau  (m+1) \ldots (n-2) n$ or  $n\tau  (m+1) \ldots (n-2) (n-1)$
where $\tau \in S_m(132)$. Thus, the highest power of 
$x$ occurring in  $Q_{n,132}^{(1,1,m,0)}(x)$ is $x^{n-2-m}$ which 
occurs with a coefficient of $2C_m$.

For parts (ii) and (iii), we have the recursion that 

\begin{equation}\label{eq:2high11m0a}
Q_{n,132}^{(1,1,m,0)}(x) = \sum_{i=1}^n Q_{i-1,132}^{(0,1,m,0)}(x)
Q_{n-i,132}^{(1,0,m,0)}(x).
\end{equation}

We proved in \cite{kitremtieII} that  the highest power of 
$x$ which occurs in either  $Q_{n,132}^{(0,1,m,0)}(x)$ or 
$Q_{n,132}^{(1,0,m,0)}(x)$ is $x^{n-1-m}$ and 
\begin{equation*}\label{eq:2high11m0b}
Q_{n,132}^{(0,1,m,0)}(x)|_{x^{n-1-m}} =  
Q_{n,132}^{(1,0,m,0)}(x)|_{x^{n-1-m}} = C_m.
\end{equation*}

It is then easy to check that the highest coefficient of $x$ 
in $Q_{i-1,132}^{(0,1,m,0)}(x)
Q_{n-i,132}^{(1,0,m,0)}(x)$ is less than $x^{n-3-m}$ for 
$i=3, \ldots, n-3$. 

We also proved in \cite{kitremtieII} that  
\begin{eqnarray*}
Q_{n,132}^{(1,0,1,0)}(x)|_{x^{n-3}} &=& Q_{n,132}^{(0,1,1,0)}(x)|_{x^{n-3}}
= 2 + \binom{n-1}{2} \ \mbox{for}\  n \geq 4 \ \mbox{and} \\
Q_{n,132}^{(1,0,m,0)}(x)|_{x^{n-m-2}} &=& 
Q_{n,132}^{(0,1,m,0)}(x)|_{x^{n-m-2}} \\
&=& C_{m+1}+C_m+2C_m(n-2-m) \ \mbox{for}\  n \geq 3+m \ \mbox{and} \ m \geq 2.
\end{eqnarray*}

For $m =1$, we are left with 4 cases to consider in 
the recursion (\ref{eq:2high11m0a}). We start with the $m=1$ case. \\
\ \\
{\bf Case 1.} $i=1$. In this case, $Q_{i-1,132}^{(0,1,1,0)}(x)
Q_{n-i,132}^{(1,0,1,0)}(x)|_{x^{n-4}} = Q_{n-1,132}^{(1,0,1,0)}(x)|_{x^{n-4}}$ 
and 
$$Q_{n-1,132}^{(1,0,1,0)}(x)|_{x^{n-4}} = 
2+\binom{n-2}{2} \ \mbox{for } n \geq 5.$$
{\bf Case 2.} $i=2$. In this case, $Q_{i-1,132}^{(0,1,1,0)}(x)
Q_{n-i,132}^{(1,0,1,0)}(x)|_{x^{n-4}} = Q_{n-2,132}^{(1,0,1,0)}(x)|_{x^{n-4}}$ 
and 
$$Q_{n-2,132}^{(1,0,1,0)}(x)|_{x^{n-4}} = 1 \ \mbox{for } n \geq 5.$$
{\bf Case 3.} $i=n-1$. In this case, $Q_{i-1,132}^{(0,1,1,0)}(x)
Q_{n-i,132}^{(1,0,1,0)}(x)|_{x^{n-4}} = Q_{n-2,132}^{(0,1,1,0)}(x)|_{x^{n-4}}$ 
and 
$$Q_{n-2,132}^{(0,1,1,0)}(x)|_{x^{n-4}} = 1 \ \mbox{for } n \geq 5.$$
{\bf Case 4.} $i=n$. In this case, $Q_{i-1,132}^{(0,1,1,0)}(x)
Q_{n-i,132}^{(1,0,1,0)}(x)|_{x^{n-4}} = Q_{n-1,132}^{(0,1,1,0)}(x)|_{x^{n-4}}$ 
and 
$$Q_{n-1,132}^{(0,1,1,0)}(x)|_{x^{n-4}} = 
2+\binom{n-2}{2} \ \mbox{for } n \geq 5.$$

Thus, $Q_{n,132}^{(0,1,1,0)}(x)|_{x^{n-4}} =6+2\binom{n-2}{2}$ for $n \geq 5$.

Next we consider the case when $m\geq 2$. Again we have 4 cases.\\ 
\ \\
{\bf Case 1.} $i=1$. In this case, $Q_{i-1,132}^{(0,1,m,0)}(x)
Q_{n-i,132}^{(1,0,m,0)}(x)|_{x^{n-3-m}} = 
Q_{n-1,132}^{(1,0,m,0)}(x)|_{x^{n-3-m}}$ 
and 
$$Q_{n-1,132}^{(1,0,m,0)}(x)|_{x^{n-3-\ell}} = 
C_{m+1}+C_m +2C_m(n -3 -m) \ \mbox{for } n \geq 4+m.$$
{\bf Case 2.} $i=2$. In this case, $Q_{i-1,132}^{(0,1,m,0)}(x)
Q_{n-i,132}^{(1,0,m,0)}(x)|_{x^{n-3-m}} = 
Q_{n-2,132}^{(1,0,m,0)}(x)|_{x^{n- 3-m}}$ 
and 
$$Q_{n-2,132}^{(1,0,m,0)}(x)|_{x^{n-3-m}} =C_m  \ \mbox{for } n \geq 4+m.$$
{\bf Case 3.} $i=n-1$. In this case, $Q_{i-1,132}^{(0,1,m,0)}(x)
Q_{n-i,132}^{(1,0,m,0)}(x)|_{x^{n-3-m}} = 
Q_{n-2,132}^{(0,1,m,0)}(x)|_{x^{n-3-m}}$ 
and 
$$Q_{n-2,132}^{(0,1,m,0)}(x)|_{x^{n-3-m}} = C_m \ \mbox{for } n \geq 4+m.$$
{\bf Case 4.} $i=n$. In this case, $Q_{i-1,132}^{(0,1,2,0)}(x)
Q_{n-i,132}^{(1,0,m,0)}(x)|_{x^{n-3-m}} = 
Q_{n-1,132}^{(0,1,m,0)}(x)|_{x^{n-3-m}}$ 
and 
$$Q_{n-1,132}^{(0,1,m,0)}(x)|_{x^{n-3-m}} = C_{m+1}+C_m +2C_m(n -3 -m) \ \mbox{for } n \geq 4+m.$$

Thus, for $n \geq 4+m$, 
\begin{eqnarray*} 
Q_{n,132}^{(1,1,m,0)}(x)|_{x^{n-3-m}} &=&  2C_{m+1}+4C_m +4C_m(n -3 -m) \\
&=&  2C_{m+1} +8C_m + 4C_m(n -4 -m). 
\end{eqnarray*}

Thus, when $m=2$, we obtain that 
$$Q_{n,132}^{(0,1,2,0)}(x)|_{x^{n-5}} =26+8(n-6) \ \mbox{for } n \geq 6$$ 
and, for $m=3$, we obtain that  
$$Q_{n,132}^{(0,1,3,0)}(x)|_{x^{n-6}} =68 + 20(n-7) \mbox{for } n \geq 7$$ 
which agrees with our computed series. 
\end{proof}

We also have computed that 

\begin{align*}
&Q_{132}^{(2,1,1,0)}(t,x) =1+t+2 t^2+5 t^3+14 t^4+(39+3 x) t^5+\left(107+22 x+3 x^2\right) t^6+\ \ \ \ \ \ \ \ \ \ \ \\
&\left(290+105 x+31 x^2+3 x^3\right) t^7+\left(779+415 x+190 x^2+43
x^3+3 x^4\right) t^8+\\
&\left(2079+1477 x+909 x^2+336 x^3+58 x^4+3 x^5\right) t^9+\\
&\left(5522+4922 x+3765 x^2+1938 x^3+570 x^4+76 x^5+3 x^6\right) t^{10}+
\cdots ,
\end{align*}

\vspace{-0.5cm}

\begin{align*}
&Q_{132}^{(2,1,2,0)}(t,x) =1+t+2 t^2+5 t^3+14 t^4+42 t^5+(126+6 x) t^6+\left(376+47 x+6 x^2\right) t^7+\\
&\left(1115+250 x+59 x^2+6 x^3\right) t^8+ \left(3289+1110 x+386
x^2+71 x^3+6 x^4\right) t^9+\\
&\left(9660+4444 x+2045 x^2+558 x^3+83 x^4+6 x^5\right) t^{10}+
\cdots , \ \mbox{and} 
\end{align*}

\vspace{-0.5cm}

\begin{align*}
&Q_{132}^{(2,1,3,0)}(t,x) =1+t+2 t^2+5 t^3+14 t^4+42 t^5+132 t^6+(414+15 x) t^7+\ \ \ \ \ \ \ \ \ \  \ \ \ \ \ \ \ \ \\
&\left(1293+122 x+15 x^2\right) t^8+
\left(4025+670 x+152 x^2+15 x^3\right) t^9+\\
&\left(12486+3124
x+989 x^2+182 x^3+15 x^4\right) t^{10}+ \cdots .
\end{align*}

Again one can easily explain the highest coefficient in 
$Q_{n,132}^{(2,1,m,0)}(x)$. That is, to have the maximum number 
of $\MMP(2,1,m,0)$-matches in a $\sg \in S_n(132)$, the permutation 
must be of the form 
\begin{eqnarray*}
&&(n-2)\tau  (m+1) \ldots (n-3)(n-1) n, \\
&&(n-1)\tau  (m+1) \ldots (n-3)(n-2) n, \mbox{or} \\ 
&&n\tau  (m+1) \ldots (n-3) (n-2) (n-1)
\end{eqnarray*}
where $\tau \in S_m(132)$. Thus, the highest power of 
$x$ occurring in  $Q_{n,132}^{(2,1,m,0)}(x)$ is $x^{n-3-m}$ which 
occurs with a coefficient of $3C_m$.

We have computed that 
\begin{align*}
&Q_{132}^{(1,2,1,0)}(t,x) = 1+t+2 t^2+5 t^3+14 t^4+(37+5 x) t^5+\left(94+33 x+5 x^2\right) t^6+\\
&\left(232+144 x+48 x^2+5 x^3\right) t^7+\left(560+520 x+277 x^2+68
x^3+5 x^4\right) t^8+\\
&\left(1328+1680 x+1248 x^2+508 x^3+93 x^4+5 x^5\right) t^9+
\cdots .
\end{align*}

\vspace{-0.5cm}

\begin{align*}
&Q_{132}^{(1,2,2,0)}(t,x) = 1+t+2 t^2+5 t^3+14 t^4+42 t^5+(122+10 x) t^6+
\ \ \ \ \ \ \ \ \ \ \ \ \ \ \ \\
&\left(348+71 x+10 x^2\right) t^7+\left(978+351 x+91 x^2+10 x^3\right) t^8+\\
&\left(2715+1463x+563 x^2+111 x^3+10 x^4\right) t^9+
\cdots ,  \ \mbox{and}
\end{align*}

\vspace{-0.5cm}

\begin{align*}
&Q_{132}^{(1,2,3,0)}(t,x) = 1+t+2 t^2+5 t^3+14 t^4+42 t^5+132 t^6+(404+25 x) t^7+ \ \ \ \ \\
&\left(1220+185 x+25 x^2\right) t^8+
\left(3655+947 x+235 x^2+25 x^3\right) t^9 + 
\cdots . 
\end{align*}

Again, one can easily explain the highest coefficient in 
$Q_{n,132}^{(1,2,m,0)}(x)$. That is, to have the maximum number 
of $\MMP(1,2,m,0)$-matches in a $\sg \in S_n(132)$, one 
must be of the form 
\begin{eqnarray*}
&&(n-2)(n-1)\tau  (m+1) \ldots (n-3)n,\\
&&(n-1)(n-2)\tau  (m+1) \ldots (n-3)n,\\
&&n(n-2)\tau  (m+1) \ldots (n-3)(n-1),\\
&&n(n-1)\tau  (m+1) \ldots (n-3)(n-2), \ \mbox{or} \\
&&(n-1)n\tau  (m+1) \ldots (n-3)(n-2)
\end{eqnarray*}
where $\tau \in S_m(132)$.
Thus, the highest power of 
$x$ occurring in  $Q_{n,132}^{(1,2,m,0)}(x)$ is $x^{n-3-m}$ which 
occurs with a coefficient of $5C_m$. 

Finally, we have computed that 
\begin{align*}
&Q_{132}^{(2,2,1,0)}(t,x) = 1+t+2 t^2+5 t^3+14 t^4+42 t^5+(123+9 x) t^6+
\left(351+69 x+9 x^2\right) t^7+\\
&\left(982+343 x+96 x^2+9 x^3\right) t^8+\left(2707+1405 x+609
x^2+132 x^3+9 x^4\right) t^9+
\cdots , 
\end{align*}

\vspace{-0.5cm}

\begin{align*}
&Q_{132}^{(2,2,2,0)}(t,x) = 1+t+2 t^2+5 t^3+14 t^4+42 t^5+132 t^6+(411+18 x)t^7 +\ \ \ \ \ \ \ \ \ \ \ \ \ \ \ \ \ \ \\
& \left(1265+147 x+18 x^2\right)t^8+ \left(3852+809 x+183 x^2+18 x^3\right)t^9+\\
&\left(11626+3704 x+1229 x^2+219 x^3+18 x^4\right)t^{10}+ \cdots , \ \mbox{and}  
\end{align*}

\vspace{-0.5cm}

\begin{align*}
&Q_{132}^{(2,2,3,0)}(t,x) = 1+t+2 t^2+5 t^3+14 t^4+42 t^5+132 t^6+429 t^7+
(1385+45 x) t^8 +\ \ \ \ \ \ \\
& \left(4436+381 x+45 x^2\right)t^9+ \left(14118+2162 x+471 x^2+45
x^3\right)t^{10} +\\
& \left(44670+10361 x+3149 x^2+561 x^3+45 x^4\right)t^{11}+ \cdots .
\end{align*}

Again, one can easily explain the highest coefficient in 
$Q_{n,132}^{(2,2,m,0)}(x)$. That is, to have the maximum number 
of $\MMP(2,2,m,0)$-matches in a $\sg \in S_n(132)$, one 
must be of the form 
\begin{eqnarray*}
&&n(n-1)\tau  (m+1) \ldots (n-4)(n-3)(n-2),\\
&&(n-1)n\tau  (m+1) \ldots (n-4)(n-3)(n-2),\\
&&n(n-2)\tau  (m+1) \ldots (n-4)(n-3)(n-1),\\
&&n(n-3)\tau  (m+1) \ldots (n-4)(n-2)(n-1),\\
&&(n-1)(n-2)\tau  (m+1) \ldots (n-4)(n-3)n,\\
&&(n-2)(n-1)\tau  (m+1) \ldots (n-4)(n-3)n,\\
&&(n-1)(n-3)\tau  (m+1) \ldots (n-4)(n-2)n,\\
&&(n-2)(n-3)\tau  (m+1) \ldots (n-4)(n-1)n, \ \mbox{or} \\
&&(n-3)(n-2)\tau  (m+1) \ldots (n-4)(n-1)n
\end{eqnarray*}
where $\tau \in S_m(132)$.
Thus, the highest power of 
$x$ occurring in  $Q_{n,132}^{(2,2,m,0)}(x)$ is $x^{n-4-m}$ which 
occurs with a coefficient of $9C_m$.

\section{$Q_{n,132}^{(0,k,\ell,m)}(x) = Q_{n,132}^{(0,m,\ell,k)}(x)$ 
where $k,\ell,m \geq 1$}

By Lemma \ref{sym}, we only need to consider $Q_{n,132}^{(0,k,\ell,m)}(x)$.  Suppose that $k, \ell, m \geq 1$ and $n \geq k+m$.  
It is clear that $n$ can never match 
the pattern $\MMP(0,k,\ell,m)$ for $k,\ell, m \geq 1$ in any 
$\sg \in S_n(132)$. If $\sg = \sg_1 \ldots \sg_n \in S_n(132)$  
and $\sg_i =n$, then we have three cases, depending 
on the value of $i$. \\
\ \\
{\bf Case 1.} $i < k$. It is easy to see that as we sum 
over all the permutations $\sg$ in $S_n^{(i)}(132)$, our choices 
for the structure for $A_i(\sg)$ will contribute a factor 
of $C_{i-1}$ to $Q_{n,132}^{(0,k,\ell,m)}(x)$ since 
none of the elements $\sg_j$ for $j \leq k$ can match 
$\MMP(0,k,\ell,m)$ in $\sg$. 
Similarly, our choices 
for the structure for $B_i(\sg)$ will contribute a factor 
of $Q_{n-i,132}^{(0,k-i,\ell,m)}(x)$ to $Q_{n,132}^{(0,k,\ell,m)}(x)$ 
since $\sg_1 \ldots \sg_i$ will automatically be 
in the second quadrant relative to the coordinate system 
with the origin at $(s,\sg_s)$ for any $s > i$.  Thus, the 
permutations in Case 1 will contribute 
$$\sum_{i=1}^{k-1} C_{i-1} Q_{n-i,132}^{(0,k-i,\ell,m)}(x)$$ 
to $Q_{n,132}^{(0,k,\ell,m)}(x)$.\\
\ \\
{\bf Case 2.} $k \leq i \leq n-m$. It is easy to see that as we sum 
over all the permutations $\sg$ in $S_n^{(i)}(132)$, our choices 
for the structure for $A_i(\sg)$ will contribute a factor 
of $Q_{i-1,132}^{(0,k,\ell,0)}(x)$ to $Q_{n,132}^{(0,k,\ell,m)}(x)$ since 
the elements in $B_i(\sg)$ will all be in the fourth quadrant 
relative to a coordinate system centered at $(r,\sg_r)$ for 
$r \leq i$ in this case. 
Similarly, our choices 
for the structure for $B_i(\sg)$ will contribute a factor 
of $Q_{n-i,132}^{(0,0,\ell,m)}(x)$ to $Q_{n,132}^{(0,k,\ell,m)}(x)$ 
since $\sg_1 \ldots \sg_i$ will automatically be 
in the second quadrant relative to the coordinate system 
with the origin at $(s,\sg_s)$ for any $s > i$.  Thus, the 
permutations in Case 2 will contribute 
$$\sum_{i=k}^{n-m} Q_{i-1,132}^{(0,k,\ell,0)}(x) 
Q_{n-i,132}^{(0,0,\ell,m)}(x)$$ 
to $Q_{n,132}^{(0,k,\ell,m)}(x)$.\\
\ \\
{\bf Case 3.} $i \geq n-m +1$. It is easy to see that as we sum 
over all the permutations $\sg$ in $S_n^{(i)}(132)$, our choices 
for the structure for $A_i(\sg)$ will contribute a factor 
of $Q_{i-1,132}^{(0,k,\ell,m-(n-i))}(x)$ 
to $Q_{n,132}^{(0,k,\ell,m)}(x)$ since 
the elements in $B_i(\sg)$ will all be in the fourth quadrant 
relative to a coordinate system centered at $(r,\sg_r)$ for 
$r \leq i$ in this case. 
Similarly, our choices 
for the structure for $B_i(\sg)$ will contribute a factor 
of $C_{n-i}$ to $Q_{n,132}^{(0,k,\ell,m)}(x)$ 
since the elements in $B_i(\sg)$ do not have 
enough elements to the right to match $\MMP(0,k,\ell,m)$ in 
$\sg$. Thus, the 
permutations in Case 3 will contribute 
$$\sum_{i=n-m+1}^{n} Q_{i-1,132}^{(0,k,\ell,m-(n-i))}(x) 
C_{n-i}$$ 
to $Q_{n,132}^{(0,k,\ell,m)}(x)$. Hence, for $ n \geq k + m$,  
\begin{eqnarray}\label{0klm-rec} 
Q_{n,132}^{(0,k,\ell,m)}(x) &=& \sum_{i=1}^{k-1} C_{i-1} 
Q_{n-i,132}^{(0,k-i,\ell,m)}(x) + 
\sum_{i=k}^{n-m} Q_{i-1,132}^{(0,k,\ell,0)}(x) 
Q_{n-i,132}^{(0,0,\ell,m)}(x) +\nonumber \\
&& \sum_{i=n-m+1}^{n} Q_{i-1,132}^{(0,k,\ell,m-(n-i))}(x) C_{n-i}.
\end{eqnarray}
Multiplying (\ref{0klm-rec}) by $t^n$ and summing, it is easy 
to compute that 
\begin{eqnarray*}\label{eq:Q0klm0}
Q_{132}^{(0,k,\ell,m)}(t,x) &=& \sum_{p=0}^{k+m-1} C_p t^p + \nonumber \\
&&\sum_{i=0}^{k-2} C_i t^i \left( t Q_{132}^{(0,k-1-i,\ell,m)}(t,x) -t 
\sum_{r=0}^{k-i+m-2} C_rt^r \right) + \nonumber \\
&&t \left( Q_{132}^{(0,k,\ell,0)}(t,x) - \sum_{a=0}^{k-2} C_a t^a \right) 
\left( Q_{132}^{(0,0,\ell,m)}(t,x) - \sum_{b=0}^{m-1} C_b t^b \right) + 
\nonumber \\
&& \sum_{j=0}^{m-1} C_j t^j \left( t Q_{132}^{(0,k,\ell,m-j)}(t,x) -t 
\sum_{s=0}^{k+m-j-2} C_st^s\right).
\end{eqnarray*}
Note that the $j=0$ term in the last sum 
is $tQ_{132}^{(0,k,\ell,m)}(t,x) - t\sum_{s=0}^{k+m-2}C_st^s$. 
Thus, taking the term $tQ_{132}^{(0,k,\ell,m)}(t,x)$ over to the other 
side and combining the sum $t\sum_{s=0}^{k+m-2}C_st^s$ with 
the sum $\sum_{p=0}^{k+m-1}C_pt^p$ to obtain 
$C_{k+m-1}t^{k+m-1}+(1-t)\sum_{p=0}^{k+m-2}C_pt^p$ and then 
dividing both sides 
by $1-t$ will yield the following theorem. 
\begin{theorem}\label{thm:Q0klm}
\begin{eqnarray*}\label{eq:Q0klm}
Q_{132}^{(0,k,\ell,m)}(t,x) &=& \sum_{p=0}^{k+m-2} C_p t^p + 
\frac{C_{k+m-1}t^{k+m-1}}{1-t} + \nonumber \\
&&\frac{t}{1-t}\sum_{i=0}^{k-2} C_i t^i \left( Q_{132}^{(0,k-1-i,\ell,m)}(t,x) 
-\sum_{r=0}^{k-i+m-2} C_rt^r\right) + \nonumber \\
&&\frac{t}{1-t} \left( Q_{132}^{(0,k,\ell,0)}(t,x) - \sum_{a=0}^{k-2} C_a t^a \right) 
\left( Q_{132}^{(0,0,\ell,m)}(t,x) - \sum_{b=0}^{m-1} C_b t^b \right) + 
\nonumber \\
&& \frac{t}{1-t}\sum_{j=1}^{m-1} C_j t^j \left( Q_{132}^{(0,k,\ell,m-j)}(t,x) 
- \sum_{s=0}^{k+m-j-2} C_st^s\right).
\end{eqnarray*}
\end{theorem}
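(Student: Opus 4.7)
The plan is to convert the recurrence (\ref{0klm-rec}) into a generating-function identity by multiplying both sides by $t^n$ and summing over $n \geq k+m$, then solving for $Q_{132}^{(0,k,\ell,m)}(t,x)$. A preliminary observation I would record is that $Q_{n,132}^{(0,k,\ell,m)}(x) = C_n$ whenever $n < k+m$, because in such a short permutation no element can simultaneously have $k$ points to its upper-left and $m$ points to its lower-right. Consequently the left-hand side of the summed identity becomes $Q_{132}^{(0,k,\ell,m)}(t,x) - \sum_{p=0}^{k+m-1} C_p t^p$.

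I would then process each of the three sums on the right of (\ref{0klm-rec}) separately by interchanging the order of summation. For the first, the substitution $u = n-i$ followed by shifting $i \mapsto i+1$ yields $t\sum_{i=0}^{k-2} C_i t^i \bigl(Q_{132}^{(0,k-1-i,\ell,m)}(t,x) - \sum_{r=0}^{k+m-i-2} C_r t^r\bigr)$, where the subtraction compensates for the initial segment on which $Q_{u,132}^{(0,k-1-i,\ell,m)}(x)$ still agrees with $C_u$. The middle, convolution-type sum produces $t\bigl(Q_{132}^{(0,k,\ell,0)}(t,x) - \sum_{a=0}^{k-2} C_a t^a\bigr)\bigl(Q_{132}^{(0,0,\ell,m)}(t,x) - \sum_{b=0}^{m-1} C_b t^b\bigr)$ by the usual product-of-generating-functions argument, with tail subtractions at both ends because the convolution is truncated on both sides. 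The third sum, after substituting $v = n-i$ and relabeling $v \to j$, becomes $\sum_{j=0}^{m-1} C_j t^{j+1}\bigl(Q_{132}^{(0,k,\ell,m-j)}(t,x) - \sum_{s=0}^{k+m-j-2} C_s t^s\bigr)$, where again the tail accounts for those short lengths on which the generating function still equals $C_u$.

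The decisive move, already flagged in the discussion preceding the theorem, is that the $j=0$ term of this last sum equals $tQ_{132}^{(0,k,\ell,m)}(t,x) - t\sum_{s=0}^{k+m-2} C_s t^s$ and therefore contains the unknown generating function itself. Transferring $tQ_{132}^{(0,k,\ell,m)}(t,x)$ to the left-hand side produces a factor of $(1-t)Q_{132}^{(0,k,\ell,m)}(t,x)$, while the remaining constant corrections collapse via $\sum_{p=0}^{k+m-1} C_p t^p - t\sum_{s=0}^{k+m-2} C_s t^s = (1-t)\sum_{p=0}^{k+m-2} C_p t^p + C_{k+m-1} t^{k+m-1}$. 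Dividing through by $1-t$ and restricting the surviving piece of the third sum to $j \geq 1$ then gives exactly the formula in Theorem \ref{thm:Q0klm}. The main obstacle is purely bookkeeping: making sure every tail subtraction has the correct upper limit so that the associated partial sum really coincides with $\sum C_u t^u$, and verifying that the prefix $\sum_{p=0}^{k+m-2} C_p t^p + C_{k+m-1} t^{k+m-1}/(1-t)$ emerges cleanly from combining the initial-segment correction on the left with the $j=0$ correction on the right. No genuinely new combinatorial input is needed beyond the case analysis already used to establish (\ref{0klm-rec}).
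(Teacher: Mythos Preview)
Your proposal is correct and follows essentially the same approach as the paper's own proof: multiply the recursion~(\ref{0klm-rec}) by $t^n$, sum over $n\geq k+m$, process the three pieces by interchanging summation (using that $Q_{u,132}^{(0,k',\ell,m')}(x)=C_u$ on the relevant initial segments), then isolate the $j=0$ term of the last sum, transfer $tQ_{132}^{(0,k,\ell,m)}(t,x)$ to the left, and divide by $1-t$. The telescoping of $\sum_{p=0}^{k+m-1}C_pt^p - t\sum_{s=0}^{k+m-2}C_st^s$ into $(1-t)\sum_{p=0}^{k+m-2}C_pt^p + C_{k+m-1}t^{k+m-1}$ is exactly the simplification the paper performs.
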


Note that since we can compute $Q_{132}^{(0,k,\ell,0)}(t,x) = 
Q_{132}^{(0,0,\ell,k)}(t,x)$ by Theorem \ref{thm:Q0kl0}, we 
can compute $Q_{132}^{(0,k,\ell,m)}(t,x)$ for 
all $k,\ell,m \geq 1$.

\subsection{Explicit formulas for  $Q^{(0,k,\ell,m)}_{n,132}(x)|_{x^r}$}

It follows from Theorem \ref{thm:Q0klm} that 
 
\begin{eqnarray*}
Q_{132}^{(0,1,\ell,1)}(t,x) &=& 1 +\frac{t}{1-t} + 
\frac{t}{1-t}Q_{132}^{(0,1,\ell,0)}(t,x)
(Q_{132}^{(0,0,\ell,1)}(t,x)-1)  \nonumber \\
&=& \frac{1}{1-t} + \frac{t}{1-t}Q_{132}^{(0,1,\ell,0)}(t,x)
(Q_{132}^{(0,0,\ell,1)}(t,x)-1), 
\end{eqnarray*}

\vspace{-0.5cm}

\begin{eqnarray*}
Q_{132}^{(0,1,\ell,2)}(t,x) &=& 1 + t +
\frac{2t^2}{1-t} + \frac{t}{1-t}Q_{132}^{(0,1,\ell,0)}(t,x)
(Q_{132}^{(0,0,\ell,2)}(t,x)-(1+t)) + \nonumber \\
&&\frac{t^2}{1-t}(Q_{132}^{(0,1,\ell,1)}(t,x) -1), 
\end{eqnarray*}
and 

\begin{eqnarray*}
Q_{132}^{(0,2,\ell,2)}(t,x) &=& 1 + t +2t^2+ \frac{5t^3}{1-t} + 
\frac{t}{1-t}(Q_{132}^{(0,1,\ell,2)}(t,x)-(1+t+2t^2))+ 
\nonumber \\
&&\frac{t}{1-t}(Q_{132}^{(0,2,\ell,0)}(t,x)-1)
(Q_{132}^{(0,0,\ell,2)}(t,x)-(1+t)) + \nonumber \\
&&\frac{t^2}{1-t}(Q_{132}^{(0,2,\ell,1)}(t,x) -(1+t)). 
\end{eqnarray*}

We used these formulas to compute the following. 

\begin{align*}
&Q_{132}^{(0,1,1,1)}(t,x) = 1+t+2 t^2+5 t^3+ (13+x)t^4 +
\left(33+8 x+x^2\right)t^5+
\ \ \ \ \ \ \ \ \ \ \ \ \ \ \ \ \ \ \ \ \ \ \ \ \ \\ 
&  \left(81+39 x+11 x^2+x^3\right)t^6+\left(193+150 x+70 x^2+15 x^3+x^4\right)t^7+\\
&\left(449+501 x+337 x^2+122 x^3+20 x^4+x^5\right)t^8+\\
& \left(1025+1524 x+1363 x^2+719 x^3+204 x^4+26 x^5+x^6\right)t^9+ \\
&\left(2305+4339 x+4891
x^2+3450 x^3+1450 x^4+327 x^5+33 x^6+x^7\right)t^{10}+ \cdots. 
\end{align*}

\vspace{-0.5cm}

\begin{align*}
&Q_{132}^{(0,1,2,1)}(t,x) = 1+t+2 t^2+5 t^3+14 t^4+2  (20+x)t^5+ \left(113+17 x+2 x^2\right)t^6+ \ \ \ \ \ \ \ \ \ \ \ \\
&  \left(314+92 x+21 x^2+2 x^3\right)t^7+\left(859+404 x+140 x^2+25
x^3+2 x^4\right)t^8+ \\
&\left(2319+1567 x+745 x^2+200 x^3+29 x^4+2 x^5\right)
t^9+\\
& \left(6192+5597 x+3438 x^2+1262 x^3+272 x^4+33 x^5+2 x^6\right)t^{10}+ 
\cdots.
\end{align*}

\vspace{-0.5cm}

\begin{align*}
&Q_{132}^{(0,1,3,1)}(t,x) = 1+t+2 t^2+5 t^3+14 t^4+42 t^5+ (127+5 x)t^6+\left(380+44 x+5 x^2\right)t^7+ \\
& 
\left(1125+246 x+54 x^2+5 x^3\right)t^8+\left(3299+1135
x+359 x^2+64 x^3+5 x^4\right)t^9 +\\
& \left(9592+4691 x+1942 x^2+492 x^3+74 x^4+5 x^5\right)t^{10}+\cdots. 
\end{align*}

Our next theorem will explain the coefficient of the highest 
and second highest powers of $x$ that appear in 
$Q_{n,132}^{(0,1,\ell,1)}(x)$ in these series. 

\begin{theorem} \ 
\begin{itemize}

\item[(i)] For $n \geq 3 +\ell$, the highest power of 
$x$ that occurs in $Q_{n,132}^{(0,1,\ell,1)}(x)$ is $x^{n-2-\ell}$ which 
appears with a coefficient of $C_\ell$. 

\item[(ii)] For $n \geq 5$, $Q_{n,132}^{(0,1,1,1)}(x)|_{x^{n-4}} = 
5 +\binom{n-2}{2}$.

\item[(iii)] For all  $\ell \geq 2$ and 
$n \geq 4+\ell$, $Q_{n,132}^{(0,1,2,1)}(x)|_{x^{n-3-\ell}} = 
C_{\ell+1} +6C_\ell +2C_\ell (n-4 -\ell)$.

\end{itemize}
\end{theorem}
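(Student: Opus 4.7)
The plan is to derive and then exploit a one-step recurrence for $Q_{n,132}^{(0,1,\ell,1)}(x)$. Multiplying the formula for $Q_{132}^{(0,1,\ell,1)}(t,x)$ displayed just before this theorem by $1-t$, extracting the coefficient of $t^n$, and applying Lemma \ref{sym} to replace $Q^{(0,0,\ell,1)}$ by $Q^{(0,1,\ell,0)}$ should yield, for $n \geq 2$,
$$Q_{n,132}^{(0,1,\ell,1)}(x) = Q_{n-1,132}^{(0,1,\ell,1)}(x) + \sum_{i=1}^{n-1} Q_{i-1,132}^{(0,1,\ell,0)}(x)\,Q_{n-i,132}^{(0,1,\ell,0)}(x).$$
The rest of the argument relies on facts about $Q_{m,132}^{(0,1,\ell,0)}(x)$ proved in \cite{kitremtieII}: it equals the constant $C_m$ when $m \leq \ell+1$; for $m \geq \ell+2$ it has degree exactly $m-1-\ell$ with top coefficient $C_\ell$; and for $m \geq \ell+3$ its sub-leading coefficient at $x^{m-2-\ell}$ is $2+\binom{m-1}{2}$ when $\ell=1$ and $C_{\ell+1}+C_\ell+2C_\ell(m-2-\ell)$ when $\ell\geq 2$.

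For part (i) the plan is induction on $n$ with base case $n=\ell+3$. In the recurrence, the term $Q_{n-1,132}^{(0,1,\ell,1)}(x)$ has degree at most $n-3-\ell$ (by the inductive hypothesis, or because it is constant at the base), so contributes nothing to $x^{n-2-\ell}$. A degree count in the sum shows that only $i=1$ reaches the required top degree: for $i\geq 2$, the factor $Q_{i-1,132}^{(0,1,\ell,0)}(x)$ is either constant (forcing the product to have degree $n-i-1-\ell<n-2-\ell$) or of positive degree (in which case the product has degree at most $n-3-2\ell<n-2-\ell$, since $\ell\geq 1$); the $i=1$ summand is simply $Q_{n-1,132}^{(0,1,\ell,0)}(x)$, whose top coefficient at $x^{n-2-\ell}$ is $C_\ell$.

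For parts (ii) and (iii) the same recurrence is applied to the coefficient of $x^{n-3-\ell}$. By part (i), $Q_{n-1,132}^{(0,1,\ell,1)}(x)$ contributes its top coefficient $C_\ell$. A parallel degree count leaves only three contributing indices in the sum: $i=1$ gives the sub-leading coefficient of $Q_{n-1,132}^{(0,1,\ell,0)}(x)$ at $x^{n-3-\ell}$, while $i=2$ and $i=n-1$ each give the top coefficient $C_\ell$ of $Q_{n-2,132}^{(0,1,\ell,0)}(x)$. Summing these with the $C_\ell$ from the first term produces $5+\binom{n-2}{2}$ when $\ell=1$ and $C_{\ell+1}+4C_\ell+2C_\ell(n-3-\ell)$ when $\ell\geq 2$; the elementary identity $2C_\ell(n-3-\ell)=2C_\ell(n-4-\ell)+2C_\ell$ rewrites the latter in the advertised form $C_{\ell+1}+6C_\ell+2C_\ell(n-4-\ell)$.

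The main piece of bookkeeping is ruling out surviving contributions from the boundary indices $i=\ell+3$ (where $Q_{i-1,132}^{(0,1,\ell,0)}$ first becomes non-constant) and $i=n-\ell-1$ (where $Q_{n-i,132}^{(0,1,\ell,0)}$ first becomes non-constant), which is handled by the degree inequalities $n-3-2\ell<n-3-\ell$ for $\ell\geq 1$ and $n-i-1-\ell<n-3-\ell$ for $i\geq 3$. Everything else then reduces to plugging the explicit top and sub-leading coefficients from \cite{kitremtieII} into the three surviving terms.
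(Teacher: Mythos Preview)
Your argument is correct and follows essentially the same route as the paper: both use the recurrence
\[
Q_{n,132}^{(0,1,\ell,1)}(x) = Q_{n-1,132}^{(0,1,\ell,1)}(x) + \sum_{i=1}^{n-1} Q_{i-1,132}^{(0,1,\ell,0)}(x)\,Q_{n-i,132}^{(0,0,\ell,1)}(x)
\]
and isolate the same four surviving contributions ($i=1$, $i=2$, $i=n-1$, and the $Q_{n-1,132}^{(0,1,\ell,1)}$ term) for parts (ii) and (iii). The only real difference is in part (i): the paper identifies the extremal permutations directly (those of the form $n\,\alpha\,(n-1)\,\beta$ with $\alpha\in S_\ell(132)$ and $\beta$ decreasing), whereas you run the same recurrence inductively; both arguments are fine.

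One small slip: in your degree count for (i), the clause ``or of positive degree (in which case the product has degree at most $n-3-2\ell$)'' is not quite right when $Q_{i-1,132}^{(0,1,\ell,0)}$ is nonconstant but $Q_{n-i,132}^{(0,1,\ell,0)}$ is constant (e.g.\ $i=n-1$); there the product has degree $i-2-\ell$, which can be as large as $n-3-\ell$. This does not damage the conclusion for (i) since $n-3-\ell<n-2-\ell$, and you handle exactly this case correctly in (ii)--(iii), but you should patch the wording.
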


\begin{proof}
It is easy to see that the maximum 
number of matches of $\MMP(0,1,\ell,1)$ that are possible in 
a 132-avoiding permutation is a permutation of 
the form $n~\alpha~(n-1)~\beta$ where $\alpha$ is a 132-avoiding 
permutation on the elements $n-\ell-1, \ldots, n-2$ and 
$\beta$ is the decreasing permutation on the elements 
$1, \ldots, n-\ell -2$. Thus, the highest power in $Q_n^{(0,1,\ell,1)}$ is $x^{n-\ell-2}$ which has a coefficient 
of $C_\ell$.

For parts (ii) and  (iii), we note that it follows from 
(\ref{0klm-rec}) that 
\begin{equation*}\label{01ell1rec}
Q_{n,132}^{(0,1,\ell,1)}(x) = Q_{n-1,132}^{(0,1,\ell,1)}(x) +
\sum_{i=1}^{n-1} Q_{i-1,132}^{(0,1,\ell,0)}(x) Q_{n-i,132}^{(0,0,\ell,1)}(x).
\end{equation*}
We proved in \cite{kitremtieII} that  the highest power of $x$ that appears 
in $Q_{n,132}^{(0,1,\ell,0)}(x) =Q_{n,132}^{(0,0,\ell,1)}(x)$ is 
$x^{n-\ell -1}$ which appears with a coefficient of $C_{\ell}$ for 
$n \geq \ell+2$.  This implies that the highest power of $x$ that 
appears in 
$Q_{i-1,132}^{(0,1,\ell,0)}(x) Q_{n-i,132}^{(0,0,\ell,1)}(x)$ is less than 
$x^{n-\ell -3}$ for $i =3, \ldots, n-2$. 

Hence we have four cases to consider when we 
are computing $Q_{n,132}^{(0,1,1,1)}(x)|_{x^{n-4}}$. \\
\ \\
{\bf Case 1.} $i=1$. In this case, $Q_{i-1,132}^{(0,1,1,0)}(x)
Q_{n-i,132}^{(0,0,1,1)}(x)|_{x^{n-4}} = Q_{n-1,132}^{(0,0,1,1)}(x)|_{x^{n-4}}$ 
and we proved in \cite{kitremtieII} that 
$$Q_{n-1,132}^{(0,0,1,1)}(x)|_{x^{n-4}} = 
Q_{n-1,132}^{(0,1,1,0)}(x)|_{x^{n-4}} = 
2+\binom{n-2}{2} \ \mbox{for } n \geq 5.$$
{\bf Case 2.} $i=2$. In this case, $Q_{i-1,132}^{(0,1,1,0)}(x)
Q_{n-i,132}^{(0,0,1,1)}(x)|_{x^{n-4}} = Q_{n-2,132}^{(0,0,1,1)}(x)|_{x^{n-4}}$ 
and we proved in \cite{kitremtieII} that 
$$Q_{n-2,132}^{(0,0,1,1)}(x)|_{x^{n-4}} = 
Q_{n-2,132}^{(0,1,1,0)}(x)|_{x^{n-4}} = 1 \ \mbox{for } n \geq 5.$$
{\bf Case 3.} $i=n-1$. In this case, $Q_{i-1,132}^{(0,1,1,0)}(x)
Q_{n-i,132}^{(0,0,1,1)}(x)|_{x^{n-4}} = Q_{n-2,132}^{(0,1,1,0)}(x)|_{x^{n-4}}$ 
and we proved in \cite{kitremtieII} that 
$$Q_{n-2,132}^{(0,1,1,0)}(x)|_{x^{n-4}}= 1 \ \mbox{for } n \geq 5.$$
{\bf Case 4.} $Q_{n-1,132}^{(0,1,1,1)}(x)|_{x^{n-4}}$. By part (i), we know that $Q_{n-1,132}^{(0,1,1,1)}(x)|_{x^{n-4}} = 1 \ \mbox{for } n \geq 5$. \\

Thus, $Q_{n,132}^{(0,1,1,1)}(x)|_{x^{n-4}} =5+\binom{n-2}{2}$ for $n \geq 5$.

Again there are four cases to consider when computing 
$Q_{n-1,132}^{(0,1,\ell,1)}(x)|_{x^{n-3-\ell}}$ for $\ell \geq 2$.\\

{\bf Case 1.} $i=1$. In this case, $Q_{i-1,132}^{(0,1,\ell,0)}(x)
Q_{n-i,132}^{(0,0,\ell,1)}(x)|_{x^{n-3 -\ell}} = 
Q_{n-1,132}^{(0,0,\ell,1)}(x)|_{x^{n-3-\ell}}$ 
and we proved in \cite{kitremtieII} that  
\begin{eqnarray*}
Q_{n-1,132}^{(0,0,\ell,1)}(x)|_{x^{n-3-\ell}} &=&
Q_{n-1,132}^{(0,1,\ell,0)}(x)|_{x^{n-n-3 -\ell}} \\
&=&  C_{\ell+1}+C_\ell +2C_{\ell}(n-3 -\ell) \\
&=& C_{\ell+1}+3C_\ell +2C_{\ell}(n-4 -\ell) \ \mbox{for } n \geq 4+\ell. 
\end{eqnarray*}
\ \\
{\bf Case 2.} $i=2$. In this case, $Q_{i-1,132}^{(0,1,\ell,0)}(x)
Q_{n-i,132}^{(0,0,\ell,1)}(x)|_{x^{n-3-\ell}} = 
Q_{n-2,132}^{(0,0,\ell,1)}(x)|_{x^{n-3-\ell}}$ 
and we proved in \cite{kitremtieII} that 
$$Q_{n-2,132}^{(0,0,\ell,1)}(x)|_{x^{n-3 -\ell}} = 
Q_{n-2,132}^{(0,1,\ell,0)}(x)|_{x^{n-3-\ell}}= C_\ell \ 
\mbox{for } n \geq 4+\ell.$$
{\bf Case 3.} $i=n-1$. In this case, $Q_{i-1,132}^{(0,1,\ell,0)}(x)
Q_{n-i,132}^{(0,0,\ell,1)}(x)|_{x^{n-3-\ell}} = 
Q_{n-2,132}^{(0,1,\ell,0)}(x)|_{x^{n-3-\ell}}$ 
and we proved in \cite{kitremtieII} that 
$$Q_{n-2,132}^{(0,1,\ell,0)}(x)|_{x^{n-3-\ell}} =  C_\ell \ \mbox{for } 
n \geq 4+\ell.$$
{\bf Case 4.} $Q_{n-1,132}^{(0,1,\ell,1)}(x)|_{x^{n-3-\ell}}$. By part (i), we know that 
$$Q_{n-1,132}^{(0,1,\ell,1)}(x)|_{x^{n-3-\ell}} = C_\ell \ \mbox{for } 
n \geq 4+\ell.$$

Thus, $$Q_{n,132}^{(0,1,\ell,1)}(x)|_{x^{n-3-\ell}} =C_{\ell+1} +6C_{\ell} 
+2C_{\ell}(n-4-\ell) \mbox{ for }  n\geq 4+\ell.$$

For example, when $\ell =2$, we have  that 
$$Q_{n,132}^{(0,1,2,1)}(x)|_{x^{n-5}} =17 +4(n-6) \ \mbox{for } n \geq 6$$ 
and, for $\ell = 3$, we have that 
$$Q_{n,132}^{(0,1,3,1)}(x)|_{x^{n-5}} =44 +10(n-7) \ \mbox{for } n \geq 7$$ 
which agrees with the series we computed. 
\end{proof}

The sequence 
$(Q_{n,132}^{(0,1,1,1)}(0))_{n \geq 1}$ starts out 
$1,2,5,13,33,81,193,449, \ldots $. This is the sequence A005183 
in OEIS.  Using the fact that 
$Q_{132}^{(0,1,1,0)}(t,0) = Q_{132}^{(0,0,1,1)}(t,0) = \frac{1-t}{1-2t}$, one 
can show that 
$$Q_{132}^{(0,1,1,1)}(t,x)= \frac{1-4t+5t^2-t^3}{(1-2t)^2(1-t)}$$
from which it is possible to show that 
$Q_{n,132}^{(0,1,1,1)}(0) = (n-1)2^{n-2}+1$ for $n \geq 1$.

The sequence 
$(Q_{n,132}^{(0,1,1,1)}(x)|_x)_{n \geq 4}$ starts out 
$1,8,39,150,501,1524 \ldots $. This seems to be the sequence A055281 in the OEIS. 
The $n$-th term of this sequence $(n^2-n+4)2^{n+1}-7-n$ and is 
the number of {\em directed column-convex polyominoes} of area $n+5$ having 
along the lower contour exactly 2 {\em reentrant corners}. 

\begin{problem} Verify that the sequence 
$(Q_{n,132}^{(0,1,1,1)}(x)|_x)_{n \geq 4}$ is counted by $$(n^2-9n+24)2^{n-3}-3-n$$ and if so, find a bijective correspondence with the polyominoes described in A055281 in the OEIS. \end{problem}

We have computed that 
\begin{align*}
&Q_{132}^{(0,1,1,2)}(t,x) =  1+t+2 t^2+5 t^3+14 t^4+2  (20+x)t^5+ 
\left(111+19 x+2 x^2\right)t^6 +\\
& \left(296+106 x+25 x^2+2 x^3\right)t^7+ \left(761+456 x+178 x^2+33
x^3+2 x^4\right)t^8+\\
& \left(1898+1677 x+947 x^2+295 x^3+43 x^4+2 x^5\right)t^9+\\
& \left(4619+5553 x+4191 x^2+1901 x^3+475 x^4+55 x^5+2 x^6\right)t^{10} +
\cdots.
\end{align*}

\vspace{-0.5cm}

 \begin{align*}
&Q_{132}^{(0,1,2,2)}(t,x) = 1+t+2 t^2+5 t^3+14 t^4+42 t^5+4  (32+x)t^6+
\ \ \ \ \ \ \ \ \ \ \ \ \ \ \ \ \ \ \ \ \ \\
& \left(385+40 x+4 x^2\right)t^7+ \left(1135+243 x+48 x^2+4 x^3\right)t^8+\\
& \left(3281+1170
x+351 x^2+56 x^3+4 x^4\right)t^9+\\
& \left(9324+4905 x+2016 x^2+483 x^3+64 x^4+4 x^5\right)t^{10}+\cdots.
\end{align*}

\vspace{-0.5cm}

 \begin{align*}
&Q_{132}^{(0,1,3,2)}(t,x) =
1+t+2 t^2+5 t^3+14 t^4+42 t^5+132 t^6+(419+10 x)t^7+ \ \ \ \ \ \ \\
&  \left(1317+103 x+10 x^2\right)t^8+\left(4085+644 x+123 x^2+10 x^3\right)t^9 +\\
&
\left(12514+3229 x+900 x^2+143 x^3+10 x^4\right)t^{10}+ \\
&\left(37913+14282 x+5222 x^2+1196 x^3+163 x^4+10 x^5\right)t^{11}+ \cdots.
\end{align*}

Again we can explain the coefficients of the highest and second 
highest coefficients in $Q_{n,132}^{(0,1,\ell,2)}(x)$ for large enough
$n$.  

\begin{theorem} \ 

\begin{itemize}

\item[(i)] For $n \geq \ell +4$, the highest power of $x$ that 
appears in $Q_{n,132}^{(0,1,\ell,2)}(x)$ is $x^{n-3-\ell}$ which occurs 
with a coefficient of $2C_\ell$. 

\item[(ii)] For $n \geq 7$, $\displaystyle Q_{n,132}^{(0,1,1,2)}(x)|_{x^{n-5}}= 13+  \binom{n-2}{2}$. 

\item[(iii)] For all $\ell \geq 2$ and $n \geq 5 + \ell$, 
$\displaystyle Q_{n,132}^{(0,1,\ell,2)}(x)|_{x^{n-4-\ell}}= 2C_{\ell+1} +15C_{\ell} +4C_{\ell}(n-5-\ell)$. 

\end{itemize}
\end{theorem}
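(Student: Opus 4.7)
The plan is to parallel the proof of the preceding theorem. First, specialize the general recursion (\ref{0klm-rec}) to $k=1,\ m=2$, obtaining
\begin{equation*}
Q_{n,132}^{(0,1,\ell,2)}(x) = Q_{n-1,132}^{(0,1,\ell,2)}(x) + Q_{n-2,132}^{(0,1,\ell,1)}(x) + \sum_{i=1}^{n-2} Q_{i-1,132}^{(0,1,\ell,0)}(x)\,Q_{n-i,132}^{(0,0,\ell,2)}(x),
\end{equation*}
where the three pieces correspond respectively to $\sg\in S_n(132)$ with $n$ in position $n$, position $n-1$, or some position in $\{1,\ldots,n-2\}$.

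For part (i), I would argue combinatorially. Since a match of $\MMP(0,1,\ell,2)$ at $\sg_i$ requires a greater element to its left, the value $n$ never matches. When $n$ sits in position $k$ of $\sg$, the matches split into matches in the prefix $A_k(\sg)$ (matches of $\MMP(0,1,\ell,2),\ \MMP(0,1,\ell,1),$ or $\MMP(0,1,\ell,0)$ according as $|B_k(\sg)|\geq 2$, $=1$, or $=0$) and $\MMP(0,0,\ell,2)$-matches in the suffix $B_k(\sg)$. Using the maxima from \cite{kitremtieII}, a short comparison shows the total is strictly maximized only when $k=1$, reducing the problem to maximizing $\MMP(0,0,\ell,2)$-matches in a $132$-avoiding suffix of length $n-1$. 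A second case analysis on the position of $n-1$ within that suffix isolates the unique max-achieving shape $\sg=n\cdot\tau\cdot(\ell+3)(\ell+4)\cdots(n-2)\cdot(n-1)\cdot\rho$, where $\tau$ ranges over the $C_\ell$ elements of $S_\ell(132)$ on the values $\{3,\ldots,\ell+2\}$ and $\rho$ ranges over the $C_2=2$ elements of $S_2(132)$ on $\{1,2\}$. This produces $n-\ell-3$ matches and the claimed $2C_\ell$ permutations.

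For parts (ii) and (iii), I would extract the coefficient of $x^{n-4-\ell}$ from each term of the recursion. By part (i) applied at $n-1$ and part (i) of the preceding theorem applied at $n-2$, the first two terms contribute $2C_\ell$ and $C_\ell$, respectively. Within the sum, the degree bounds $\deg Q_{i-1,132}^{(0,1,\ell,0)}(x)=\max(0,i-2-\ell)$ and $\deg Q_{n-i,132}^{(0,0,\ell,2)}(x)=\max(0,n-i-\ell-2)$ force only the indices $i\in\{1,2,n-2\}$ to contribute. Using $Q_{1,132}^{(0,1,\ell,0)}(x)=1$, $Q_{2,132}^{(0,0,\ell,2)}(x)=C_2=2$, the top coefficient $C_\ell$ of $Q_{n-3,132}^{(0,1,\ell,0)}(x)$ from \cite{kitremtieII}, and the top coefficient $2C_\ell$ of $Q_{n-2,132}^{(0,0,\ell,2)}(x)$ (which drops out of the suffix analysis in part (i)), the indices $i=2$ and $i=n-2$ each contribute $2C_\ell$, while $i=1$ contributes the \emph{second}-highest coefficient $S:=Q_{n-1,132}^{(0,0,\ell,2)}(x)|_{x^{n-4-\ell}}$. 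Summing yields the clean reduction $Q_{n,132}^{(0,1,\ell,2)}(x)|_{x^{n-4-\ell}}=7C_\ell+S$.

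The hard part is computing $S$. By Lemma \ref{sym}, $S$ equals the second-highest coefficient of $Q_{n-1,132}^{(0,2,\ell,0)}(x)$, which I would obtain by a direct combinatorial enumeration of $132$-avoiding permutations of length $n-1$ achieving one less than the maximum number of $\MMP(0,2,\ell,0)$-matches, using the same case-split strategy as in part (i) but now allowing a single match to be "broken" (alternatively, via the explicit formula in Theorem \ref{thm:Q0kl0}). For $\ell\geq 2$ this gives $S=2C_{\ell+1}+8C_\ell+4C_\ell(n-5-\ell)$, yielding the formula in (iii). For $\ell=1$ the relaxed broken-match family is richer and contributes an additional quadratic-in-$n$ term to $S$, producing the special formula in (ii).
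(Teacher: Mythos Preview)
Your approach is essentially the paper's: you write down the same specialization of the recursion (the paper's three pieces are your $i=n$, $i=n-1$, and the sum over $i\le n-2$), and for (ii)--(iii) you isolate exactly the same five contributing terms (the paper's Cases~1--5), arriving at the identical reduction $Q_{n,132}^{(0,1,\ell,2)}(x)|_{x^{n-4-\ell}}=7C_\ell+S$ with $S=Q_{n-1,132}^{(0,0,\ell,2)}(x)|_{x^{n-4-\ell}}$. The paper supplies $S$ by citing \cite{kitremtieII}; you instead sketch how to get it via Lemma~\ref{sym} and either a direct enumeration or Theorem~\ref{thm:Q0kl0}. That sketch is the one place where your write-up is thinner than the paper's, but the route is the same.

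There is one genuine difference worth noting in part~(i). The paper's stated extremal family $n\,\alpha\,(n-1)\,\beta$ with $\alpha$ on $\{n-\ell-1,\dots,n-2\}$ and $\beta$ decreasing (up to the last two entries) appears to be carried over verbatim from the $\MMP(0,1,\ell,1)$ theorem; in such a $\sigma$ the entries of $\beta$ have nothing below-left, so they do \emph{not} match $\MMP(0,1,\ell,2)$, and one gets far fewer than $n-\ell-3$ matches. Your family $n\cdot\tau\cdot(\ell+3)(\ell+4)\cdots(n-2)\cdot(n-1)\cdot\rho$ with $\tau\in S_\ell(132)$ on $\{3,\dots,\ell+2\}$ and $\rho\in S_2$ on $\{1,2\}$ does achieve $n-\ell-3$ matches and gives the correct count $2C_\ell$. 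So here your argument is not merely different but actually repairs the paper's. To make (i) airtight you should add the easy upper bound (any $\MMP(0,1,\ell,2)$-match has value $\ge\ell+3$ and value $\le n-1$, hence at most $n-\ell-3$ matches) and argue uniqueness of the extremal shape, since your ``short comparison'' and ``second case analysis'' are only asserted, not shown.
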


\begin{proof}

For (i), it is easy to see that the maximum 
number of matches of $\MMP(0,1,\ell,1)$ that are possible in 
a 132-avoiding permutation is a permutation of 
the form $n~\alpha~(n-1)~\beta$ where $\alpha$ is a 132-avoiding 
permutation on the elements $n-\ell-1, \ldots, n-2$ and 
$\beta = (n-\ell-2) (n-\ell -3) \ldots 321$ or 
$\beta = (n-\ell-2) (n-\ell -3) \ldots 312$.
Thus, the highest power 
in $Q_{n,132}^{(0,1,\ell,2)}(x)$ is $x^{n-\ell-3}$ which has a coefficient of 
$2C_\ell$.

For (ii) and (iii), we note that the recursion for $Q_{n,132}^{(0,1,\ell,2)}(x)$ 
is 
$$Q_{n,132}^{(0,1,\ell,2)}(x) = Q_{n-1,132}^{(0,1,\ell,2)}(x) +
Q_{n-2,132}^{(0,1,\ell,1)}(x) +\sum_{i=1}^{n-2} 
Q_{i-1,132}^{(0,1,\ell,0)}(x)Q_{n-i,132}^{(0,0,\ell,2)}(x).$$

Since the highest power of $x$ that occurs in 
$Q_{n,132}^{(0,1,\ell,0)}(x)$ is $x^{n-1-\ell}$ and the highest power 
of $x$ that occurs in $Q_{n,132}^{(0,0,\ell,2)}(x)$ is 
$n-2-\ell$, it follows that the highest power of 
$x$ that occurs in 
$Q_{i-1,132}^{(0,1,\ell,0)}(x)Q_{n-i,132}^{(0,0,\ell,2)}(x)$ is less 
than $x^{n-4-\ell}$ for $i=4, \dots ,n-3$. 

Thus, we have to consider five cases when computing 
$Q_{n,132}^{(0,1,\ell,2)}(x)|_{x^{n-4-\ell}}$. \\
\ \\
{\bf Case 1.} $Q_{n-1,132}^{(0,1,\ell,2)}(x)|_{x^{n-4-\ell}}$. By part (i), 
$$Q_{n-1,132}^{(0,1,\ell,2)}(x)|_{x^{n-4-\ell}}=2C_{\ell} \ \mbox{for } 
n \geq \ell+5.$$
{\bf Case 2.} $Q_{n-2,132}^{(0,1,\ell,1)}(x)|_{x^{n-4-\ell}}$. We have shown 
earlier that 
$$Q_{n-1,132}^{(0,1,\ell,1)}(x)|_{x^{n-4-\ell}}=C_{\ell} \ \mbox{for } 
n \geq \ell+5.$$
\ \\
{\bf Case 3.} $i=n-2$. In this case, $Q_{i-1,132}^{(0,1,\ell,0)}(x)Q_{n-i,132}^{(0,0,\ell,2)}(x)$ 
equals $2Q_{n-3,132}^{(0,1,\ell,0)}(x)$. We have shown in \cite{kitremtieII} 
that $Q_{n-3,132}^{(0,1,\ell,0)}(x)|_{x^{n-4-\ell}} = C_\ell$ for 
$n \geq \ell +5$ so 
that we get a contribution of $2C_{\ell}$ in this case. \\
\ \\
{\bf Case 4.} $i=2$. In this case, $Q_{i-1,132}^{(0,1,\ell,0)}(x)Q_{n-i,132}^{(0,0,\ell,2)}(x)$ 
equals $Q_{n-2,132}^{(0,0,\ell,2)}(x)$. We have shown in \cite{kitremtieII} 
that 
$$Q_{n-3,132}^{(0,0,\ell,2)}(x)|_{x^{n-4-\ell}} = 2C_\ell \ \mbox{for } 
n \geq \ell+5.$$ 
{\bf Case 5.} $i=1$. In this case, $Q_{i-1,132}^{(0,1,\ell,0)}(x)Q_{n-i,132}^{(0,0,\ell,2)}(x)$ 
equals $Q_{n-1,132}^{(0,0,\ell,2)}(x)$. We have shown in \cite{kitremtieII}  that for $n \geq \ell +5$,
$$Q_{n-3,132}^{(0,0,\ell,2)}(x)|_{x^{n-4-\ell}} = 
\begin{cases} 6+2\binom{n-2}{2} & \mbox{if $\ell =1$}, \\
2C_{\ell+1} +8C_\ell +4C_{\ell}(n-5-\ell) & \mbox{if $\ell \geq 2$}.
\end{cases}
$$
Thus, for $\ell =1$, we get 
$$Q_{n-i,132}^{(0,1,1,2)}(x)|_{x^{n-5}} = 13 +2 \binom{n-2}{2} \ \mbox{for } 
n \geq 6$$
and, for $\ell \geq 2$, 
$$Q_{n-i,132}^{(0,1,\ell,2)}(x)|_{x^{n-4-\ell}} = 2C_{\ell+1} +15C_{\ell} + 
4C_{\ell}(n-5-\ell)  \ \mbox{for } 
n \geq 5+\ell.$$
For example, when $\ell =2$, we get 
$$Q_{n-i,132}^{(0,1,2,2)}(x)|_{x^{n-6}} = 40+8(n-7) \ \mbox{for } 
n \geq 7$$
and, for $\ell =3$, we get 
$$Q_{n-i,132}^{(0,1,2,2)}(x)|_{x^{n-6}} = 103+20(n-8) \ \mbox{for } 
n \geq 8$$
which agrees with the series that we computed. \end{proof}

\begin{align*}
&Q_{132}^{(0,2,1,2)}(t,x) =  1+t+2 t^2+5 t^3+14 t^4+42 t^5+4 t^6 (32+x)+ \left(380+45 x+4 x^2\right)t^7+\\
& \left(1083+286 x+57 x^2+4 x^3\right)t^8+ \left(2964+1368
x+453 x^2+73 x^3+4 x^4\right)t^9+\\
& \left(7831+5501 x+2650 x^2+717 x^3+93 x^4+4 x^5\right)t^{10}+\\
&\left(20092+19675 x+12749 x^2+5035 x^3+1114 x^4+117
x^5+4 x^6\right)t^{11}+ \cdots.
\end{align*}

\vspace{-0.5cm}

\begin{align*}
&Q_{132}^{(0,2,2,2)}(t,x) =1+t+2 t^2+5 t^3+14 t^4+42 t^5+132 t^6+ 
(421+8 x)t^7+\ \ \ \ \ \ \ \ \ \ \ \ \ \ \ \ \ \\
&\left(1328+94x+4 x^2\right)t^8+ \left(4103+641 x+110 x^2+8 x^3\right)
t^9+\\
&
\left(12401+3376 x+885 x^2+126 x^3+8 x^4\right)t^{10}+\\
& \left(36740+15235 x+5484 x^2+1177 x^3+142 x^4+8 x^5\right)t^{11}+\\
& \left(106993+62012 x+28872
x^2+8452 x^3+1517 x^4+158 x^5+8 x^6\right)t^{12}+ \cdots. 
\end{align*}

\vspace{-0.5cm}

\begin{align*}
&Q_{132}^{(0,2,3,2)}(t,x) = 1+t+2 t^2+5 t^3+14 t^4+42 t^5+132 t^6+
429 t^7+\ \ \ \ \ \ \ \ \ \ \ \ \ \ \ \ \ \ \ \ \ \ \ \ \ \ \\
&(1410+20 x)t^8+ \left(4601+241 x+20 x^2\right)t^9+ \left(14809+1686 x+281 x^2+20
x^3\right)t^{10}+\\
& \left(46990+9187 x+2268 x^2+321 x^3+20 x^4\right)t^{11}+\\
& \left(147163+43394 x+14144 x^2+2930 x^3+361 x^4+20 x^5\right)t^{12}+ \cdots.
\end{align*}

It is easy to explain the coefficient of the highest power in 
$Q_n^{(0,2,\ell,2)}(x)$. That is, the maximum 
number of matches of $\MMP(0,2,\ell,2)$ that are possible in 
a 132-avoiding permutation is a permutation of 
the form $n(n-1)~\alpha~(n-2)~\beta$ or  
$(n-1)n~\alpha~(n-2)~\beta$
where $\alpha$ is a 132-avoiding 
permutation on the elements $n-\ell-2, \ldots, n-3$ and 
$\beta = (n-\ell-3) (n-\ell -4) \ldots 321$ or 
$\beta = (n-\ell-3) (n-\ell -4) \ldots 312$.
Thus, the highest power 
in $Q_n^{(0,1,\ell,2)}$ is $x^{n-\ell-5}$ which has a coefficient of 
$4C_\ell$.

\section{$Q_{n,132}^{(\ell,k,0,m)}(x) = Q_{n,132}^{(\ell,m,0,k)}(x)$ 
where $k,\ell,m \geq 1$}

By Lemma \ref{sym},  we need only consider $Q_{n,132}^{(\ell,k,0,m)}(x)$. Suppose that $k, \ell, m \geq 1$ and $n \geq k+m$.  
It is clear that $n$ can never match 
$\MMP(\ell,k,0,m)$ for $k,\ell, m \geq 1$ in any 
$\sg \in S_n(132)$. If $\sg = \sg_1 \ldots \sg_n \in S_n(132)$  
and $\sg_i =n$, then we have three cases, depending 
on the value of $i$. \\
 \\
{\bf Case 1.} $i < k$. It is easy to see that as we sum 
over all the permutations $\sg$ in $S_n^{(i)}(132)$, our choices 
for the structure for $A_i(\sg)$ will contribute a factor 
of $C_{i-1}$ to $Q_{n,132}^{(\ell,k,0,m)}(x)$ since 
the elements in $A_i(\sg)$ do not have enough elements 
to the left to match $\MMP(\ell,k,0,m)$ in $\sg$. 
Similarly, our choices 
for the structure for $B_i(\sg)$ will contribute a factor 
of $Q_{n-i,132}^{(\ell,k-i,0,m)}(x)$ to $Q_{n,132}^{(\ell,k,0,m)}(x)$ 
since $\sg_1 \ldots \sg_i$ will automatically be 
in the second quadrant relative to the coordinate system 
with the origin at $(s,\sg_s)$ for any $s > i$.  Thus, the 
permutations in Case~1 will contribute 
$$\sum_{i=1}^{k-1} C_{i-1}
Q_{n-i,132}^{(\ell,k-i,0,m)}(x)$$ 
to $Q_{n,132}^{(\ell,k,0,m)}(x)$.\\
\ \\
{\bf Case 2.} $k \leq i < n-m$. It is easy to see that as we sum 
over all the permutations $\sg$ in $S_n^{(i)}(132)$, our choices 
for the structure for $A_i(\sg)$ will contribute a factor 
of $Q_{i-1,132}^{(\ell-1,k,0,0)}(x)$ to $Q_{n,132}^{(\ell,k,0,m)}(x)$ since 
the elements in $B_i(\sg)$ will all be in the fourth quadrant  
and $\sg_i =n$ is in the first quadrant
relative to a coordinate system centered at $(r,\sg_r)$ for 
$r \leq i$  in this case. 
Similarly, our choices 
for the structure for $B_i(\sg)$ will contribute a factor 
of $Q_{n-i,132}^{(\ell,0,0,m)}(x)$ to $Q_{n,132}^{(\ell,k,0,m)}(x)$ 
since $\sg_1 \ldots \sg_i$ will automatically be 
in the second quadrant relative to the coordinate system 
with the origin at $(s,\sg_s)$ for any $s > i$.  Thus, the 
permutations in Case~2 will contribute 
$$\sum_{i=k}^{n-m} Q_{i-1,132}^{(\ell-1,k,0,0)}(x) 
Q_{n-i,132}^{(\ell,0,0,m)}(x)$$ 
to $Q_{n,132}^{(\ell,k,0,m)}(x)$.\\
\ \\
{\bf Case 3.} $i \geq n-m +1$. It is easy to see that as we sum 
over all the permutations $\sg$ in $S_n^{(i)}(132)$, our choices 
for the structure for $A_i(\sg)$ will contribute a factor 
of $Q_{i-1,132}^{(\ell-1,k,0,m-(n-i))}(x)$ 
to $Q_{n,132}^{(\ell,k,0,m)}(x)$ since $\sg_i =n$ 
will be in the first quadrant and 
the elements in $B_i(\sg)$ will all be in the fourth quadrant 
relative to a coordinate system centered at $(r,\sg_r)$ for 
$r \leq i$ in this case. 
Similarly, our choices 
for the structure for $B_i(\sg)$ will contribute a factor 
of $C_{n-i}$ to $Q_{n,132}^{(\ell,0,0,m)}(x)$ 
since $\sg_j$ where $j >i$ does not have enough elements 
to its right to match $\MMP(\ell,k,0,m)$ in $\sg$.  Thus, the 
permutations in Case 3 will contribute 
$$\sum_{i=n-m+1}^{n} Q_{i-1,132}^{(\ell-1,k,0,m-(n-i))}(x) 
C_{n-i}$$ 
to $Q_{n,132}^{(\ell,k,0,m)}(x)$. Thus, 
we have the following. For $ n \geq k + m $,  
\begin{eqnarray}\label{lk0m-rec} 
Q_{n,132}^{(\ell,k,0,m)}(x) &=& \sum_{i=1}^{k-1} C_{i-1} 
Q_{n-i,132}^{(\ell,k-i,0,m)}(x) + 
\sum_{i=k}^{n-m} Q_{i-1,132}^{(\ell-1,k,0,0)}(x) 
Q_{n-i,132}^{(\ell,0,0,m)}(x) +\nonumber \\
&& \sum_{i=n-m+1}^{n} Q_{i-1,132}^{(\ell-1,k,0,m-(n-i))}(x) C_{n-i}.
\end{eqnarray}
Multiplying (\ref{lk0m-rec}) by $t^n$ and summing over $n$ will yield the 
following theorem. 

\begin{theorem}\label{thm:Qlk0m}
 For all $\ell,k,m \geq 1$, 
\begin{eqnarray}\label{eq:Qlk0m0}
Q_{132}^{(\ell,k,0,m)}(t,x) &=& \sum_{p=0}^{k+m-1} C_p t^p + \nonumber \\
&&t\sum_{i=0}^{k-2} C_i t^i \left(  Q_{132}^{(\ell,k-1-i,0,m)}(t,x) -
\sum_{r=0}^{k-i+m-2} C_rt^r \right) + \nonumber \\
&&t \left( Q_{132}^{(\ell-1,k,0,0)}(t,x) - \sum_{a=0}^{k-2} C_a t^a \right) 
\left( Q_{132}^{(\ell,0,0,m)}(t,x) - \sum_{b=0}^{m-1} C_b t^b \right) + 
\nonumber \\
&& t\sum_{j=0}^{m-1} C_j t^j \left( Q_{132}^{(\ell-1,k,0,m-j)}(t,x) - 
\sum_{s=0}^{k+m-j-2} C_st^s\right).
\end{eqnarray}
\end{theorem}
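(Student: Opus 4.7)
The plan is to multiply the case-by-case recurrence~(\ref{lk0m-rec}) by $t^n$ and sum over $n \geq k+m$, then convert each resulting piece into a generating function. The left-hand side produces $Q_{132}^{(\ell,k,0,m)}(t,x) - \sum_{n=0}^{k+m-1} Q_{n,132}^{(\ell,k,0,m)}(x)\, t^n$. The first step I would take is to show that for $n \leq k+m$, no $\sigma_j$ in any $\sigma \in S_n(132)$ can match $\MMP(\ell,k,0,m)$: a match requires at least $k$ entries strictly to the left of $\sigma_j$ and at least $m$ strictly to its right, so the matching position must satisfy $k+1 \le j \le n-m$, forcing $n \ge k+m+1$. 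Consequently $Q_{n,132}^{(\ell,k,0,m)}(x) = C_n$ for $n \le k+m$, which accounts for the leading $\sum_{p=0}^{k+m-1} C_p t^p$ in the theorem.

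Next I would process each of the three sums on the right-hand side of~(\ref{lk0m-rec}) in turn. For the first sum I would swap the order of summation and reindex with $i' = i-1$ and $n' = n-i$. The inner sum becomes $\sum_{n' \ge k+m-i} Q_{n',132}^{(\ell,k-i,0,m)}(x)\,t^{n'}$, which equals $Q_{132}^{(\ell,k-i,0,m)}(t,x)$ minus its truncation at degree $k+m-i-1$. Applying the same no-match argument to the parameter quadruple $(\ell,k-i,0,m)$ shows that each truncated coefficient equals $C_{n'}$, yielding
\[
t\sum_{i'=0}^{k-2} C_{i'} t^{i'}\Bigl( Q_{132}^{(\ell,k-1-i',0,m)}(t,x) - \sum_{r=0}^{k+m-i'-2} C_r t^r\Bigr).
\]
For the second (convolution) sum I would substitute $a=i-1$, $b=n-i$; the constraints $i \ge k$ and $i \le n-m$ together with $n \ge k+m$ decouple into the independent ranges $a \ge k-1$ and $b \ge m$, so the double sum factors. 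The tails $\sum_{a=0}^{k-2} C_a t^a$ and $\sum_{b=0}^{m-1} C_b t^b$ appear because $Q_{a,132}^{(\ell-1,k,0,0)}(x) = C_a$ when $a < k-1$ and $Q_{b,132}^{(\ell,0,0,m)}(x) = C_b$ when $b < m$, again by the no-room argument. For the third sum I would set $j = n-i$ and reindex via $n' = n-j-1$; the identical reduction to $C$-coefficients for $n' \le k+m-j-2$ produces the final term of~(\ref{eq:Qlk0m0}).

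The main obstacle is not any individual manipulation but the coordinated bookkeeping of the four boundary truncations: each tail subtraction involves a specific parameter triple, and each index shift redefines the threshold at which $Q$ collapses to a Catalan number. The identifications work because the starting index $n = k+m$ of the outer sum translates, under each reindexing, into exactly the threshold at which the no-match argument guarantees $Q = C_n$. Once those thresholds are matched up, combining the four contributions with the initial $\sum_{p=0}^{k+m-1} C_p t^p$ gives~(\ref{eq:Qlk0m0}).
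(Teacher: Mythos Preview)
Your proposal is correct and follows exactly the approach the paper uses: the paper's entire proof is the single sentence ``Multiplying~(\ref{lk0m-rec}) by $t^n$ and summing over $n$ will yield the following theorem,'' and you have supplied the bookkeeping the paper omits. Your justification that $Q_{n,132}^{(\cdot,\cdot,0,\cdot)}(x)=C_n$ below each threshold, and your verification that the convolution constraints decouple into independent ranges on $a$ and $b$, are precisely the points that need checking, and they are handled correctly.
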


Note that we can compute $Q_{132}^{(\ell,k,0,0)}(t,x) =Q_{132}^{(\ell,0,0,k)}(t,x)$ by Theorem \ref{thm:Q0kl0} so that (\ref{eq:Qlk0m0}) allows 
us to compute $Q_{132}^{(\ell,k,0,m)}(t,x)$ for any 
$k,\ell, m \geq 0$.

\subsection{Explicit formulas for  $Q^{(\ell,k,0,m)}_{n,132}(x)|_{x^r}$}

It follows from Theorem \ref{thm:Qlk0m} that  
\begin{equation}\label{eq:1101}
Q_{132}^{(\ell,1,0,1)}(t,x) = 1+t+  tQ_{132}^{(\ell-1,1,0,0)}(t,x)
\left( Q_{132}^{(\ell,0,0,1)}(t,x)-1\right)+t\left( Q_{132}^{(\ell-1,1,0,1)}(t,x)-1\right),
\end{equation}

\vspace{-0.5cm}

\begin{eqnarray*}\label{eq:1102}
Q_{132}^{(\ell,1,0,2)}(t,x) &=& 1 + t + 2t^2 + tQ_{132}^{(\ell-1,1,0,0)}(t,x)
\left( Q_{132}^{(\ell,0,0,2)}(t,x)-(1+t)\right)+
\ \ \ \ \ \ \ \ \ \ \ \ \ \nonumber \\
&&t\left( Q_{132}^{(\ell-1,1,0,2)}(t,x)-(1+t) + 
t\left(Q_{132}^{(\ell-1,1,0,1)}(t,x)-1\right)\right), 
\end{eqnarray*}
and 

\begin{eqnarray*}\label{eq:1202}
Q_{132}^{(\ell,2,0,2)}(t,x) &=& 1 + t +2t^2+ 5t^3 + 
t\left(Q_{132}^{(\ell,1,0,2)}(t,x)-(1+t+2t^2)  \right)+ \nonumber \\
&&t\left( Q_{132}^{(\ell-1,2,0,0)}(t,x)-1\right)
\left( Q_{132}^{(\ell,0,0,2)}(t,x)-(1+t)\right)+\nonumber \\
&&t\left( Q_{132}^{(\ell-1,2,0,2)}(t,x)-(1+t+2t^2) + 
t\left(Q_{132}^{(\ell-1,2,0,1)}(t,x)-(1+t)\right) \right). 
\end{eqnarray*}

One can use these formulas to compute the following.

\begin{align*}
&Q_{132}^{(1,1,0,1)}(t,x) =  1+t+2 t^2+5 t^3+2  (5+2 x)t^4+ 
\left(17+17 x+8 x^2\right)t^5+\\
& \left(26+44 x+42 x^2+20 x^3\right)t^6+\left(37+90 x+129 x^2+117 x^3+56
x^4\right) t^7 +\\
& \left(50+160 x+305 x^2+397 x^3+350 x^4+168 x^5\right)t^8+\\
& \left(65+259 x+615 x^2+1029 x^3+1268 x^4+1098 x^5+528 x^6\right)t^9+\\
&
\left(82+392 x+1113 x^2+2259 x^3+3503 x^4+4167 x^5+3564 x^6+1716 x^7\right)t^{10} + \cdots.
\end{align*}

It is easy to explain the highest coefficient of $x$ in 
$Q_{n,132}^{(1,\ell,0,1)}(x)$.  
\begin{theorem} For $n \geq 3+\ell$, the highest power of 
$x$ that occurs in $Q_{n,132}^{(1,\ell,0,1)}(x)$ is 
$x^{n-2 -\ell}$ which occurs with a coefficient of 
$4C_{\ell} C_{n-\ell-2}$.
\end{theorem}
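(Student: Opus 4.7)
The plan is to argue this in the same direct combinatorial style used for the preceding $(1,2,m,0)$ and $(2,2,m,0)$ theorems in this section: write down explicit forms of the extremal permutations and count them using Catalan factors.

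First I would justify the exponent $n-2-\ell$. The positions that can never match $\MMP(1,\ell,0,1)$ in any $\sg\in S_n(132)$ are: positions $1,\ldots,\ell$ (fewer than $\ell$ elements lie to the left, so the quadrant II count cannot reach $\ell$); position $n$ (nothing lies to the right, so quadrants I and IV are empty); and position $n-1$ (only $\sg_n$ lies to the right, and a single element cannot simultaneously be larger and smaller than $\sg_{n-1}$, so either quadrant I or quadrant IV is empty). Hence at most $n-(\ell+2)=n-2-\ell$ matches are possible, giving the stated top power $x^{n-2-\ell}$.

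Next I would characterise the extremal $\sg$. In any extremal permutation every position $j\in\{\ell+1,\ldots,n-2\}$ matches, and chasing the consequences pins down the global shape. Looking at position $\ell+1$ forces $\sg_1,\ldots,\sg_\ell$ each to exceed $\sg_{\ell+1}$, so the initial block carries the top $\ell$ values. Looking at position $n-2$ together with 132-avoidance then forces one of $\sg_{n-1}$ or $\sg_n$ to supply the remaining 'corner' large value, namely $n-1$ (while $n$ is either at the head or at the other corner). This mirrors the way the $(1,2,m,0)$ and $(2,2,m,0)$ sections above produce their $5C_m$ and $9C_m$ enumerations by listing extremal forms. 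The four extremal shapes then correspond to the joint placement of $\{n,n-1\}$ at the ordered corner pair $(1,n-1)$, $(1,n)$, $(n-1,1)$, or $(n,1)$; this is the source of the factor $4$.

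For each of the four corner configurations, the remaining data consists of (i) a 132-avoiding rearrangement of the top block of $\ell$ values into the available top-block slots, contributing a factor of $C_\ell$, and (ii) a 132-avoiding rearrangement of the $n-\ell-2$ remaining small values into the available middle slots, contributing $C_{n-\ell-2}$. Since the values in the top block are all strictly greater than the values in the small block, 132-avoidance of the full permutation should reduce to 132-avoidance of each block separately, exactly as in the standard left/right Catalan decomposition around the position of $n$ used throughout the paper. Multiplying yields the claimed coefficient $4\,C_\ell\,C_{n-\ell-2}$.

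The main obstacle is the case analysis of the characterisation step: one must verify that the top $\ell$ values really are forced into the first $\ell$ slots (plus one corner for $n-1$, say), rule out configurations where a top value lies elsewhere, and confirm that the four declared configurations exhaust the extremal permutations with neither overlap nor omission. A good sanity check is the tabulated series $Q_{132}^{(1,1,0,1)}(t,x)$ given earlier in the paper, whose coefficient of $x^{n-3}$ in degree $n$ is $4,8,20,56,168,\ldots = 4C_{n-3}$, matching the $\ell=1$ case of the formula; extra care should be taken to confirm the characterisation for $\ell\ge 2$, where the small cases are subtle.
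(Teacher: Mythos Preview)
Your approach is essentially the paper's: list the extremal permutations explicitly and count them as a product of Catalan numbers. The paper does this on the symmetric side $\MMP(1,1,0,\ell)$ (the text writes ``$\MMP(\ell,1,0,1)$'', evidently a slip), declaring the four families
\[
n\,\tau\,\alpha\,(n{-}1),\quad n\,\tau\,(n{-}1)\,\alpha,\quad (n{-}1)\,\tau\,\alpha\,n,\quad (n{-}1)\,\tau\,n\,\alpha,
\]
with $\alpha\in S_\ell(132)$ on $\{1,\dots,\ell\}$ and $\tau\in S_{n-\ell-2}(132)$ on $\{\ell{+}1,\dots,n{-}2\}$, and then reading off $4C_\ell C_{n-\ell-2}$. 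Your upper bound on the exponent is actually more carefully argued than the paper's, which simply asserts the extremal forms.

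Your hesitation about the characterisation step is warranted, and in fact this is where both your sketch and the paper's own proof break down for $\ell\ge 2$. The four families above do \emph{not} exhaust the extremal permutations once $\ell\ge 2$. Concretely, take $\ell=2$, $n=5$: the permutation $53241\in S_5(132)$ has exactly one $\MMP(1,1,0,2)$-match (at position $2$) but is not of any of the four listed forms; likewise $43251$. The paper's own computed series confirms this: $Q^{(1,1,0,2)}_{5,132}(x)=32+10x$, whereas $4C_2C_1=8$. The later statement in the same section that the top coefficient of $Q^{(k,1,0,1)}_{n,132}(x)$ equals $(k+1)^2C_{n-k-2}$ is the correct generalisation (and it agrees with $4C_{n-3}$ only at $k=\ell=1$). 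So your ``four corner configurations'' picture, like the paper's four families, undercounts for $\ell\ge 2$; the right enumeration has more than four shapes and the formula $4C_\ell C_{n-\ell-2}$ is valid only at $\ell=1$.
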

\begin{proof}
It is easy to see that  the maximum number of 
$\MMP(\ell,1,0,1)$ matches occurs in $\sg \in S_n(132)$ when 
$\sg$ is of the form $n~\tau~\alpha~(n-1)$, $n~\tau~(n-1)~\alpha$,  
$(n-1)~\tau~\alpha~n$, or $(n-1)~\tau~n~\alpha$ where $\alpha$ is 
a 132-avoiding permutation on the elements $1,\ldots, \ell$ and 
$\tau$ is a 132-avoiding 
permutations of the elements $\ell +1, \ldots,n-2$. Thus, the highest 
power of $x$ in $Q_{n,132}^{(\ell,1,0,1)}(x)$ for $n \geq \ell+3$ is 
$x^{n-2-\ell}$ which occurs with a coefficient of $4C_{\ell} C_{n-\ell-2}$. 
\end{proof}

We can also explain the second highest coefficient in 
$Q_{n,132}^{(1,1,0,1)}(x)$.
\begin{theorem} For $n \geq 5$, 
$$Q_{n,132}^{(1,1,0,1)}(x)|_{x^{n-4}} = 8C_{n-3} +C_{n-4}.$$
\end{theorem}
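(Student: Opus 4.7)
The plan is to apply the recursion \eqref{lk0m-rec} with $\ell=k=m=1$, which reads
$$Q_{n,132}^{(1,1,0,1)}(x) = \sum_{i=1}^{n-1} Q_{i-1,132}^{(0,1,0,0)}(x)\,Q_{n-i,132}^{(1,0,0,1)}(x) + Q_{n-1,132}^{(0,1,0,1)}(x),$$
and to extract the coefficient of $x^{n-4}$ on both sides. I would first record the top-degree data: by Lemma \ref{sym} and the fact that $\MMP(0,0,0,1)$-matches count non-right-to-left minima, one has $Q_{k,132}^{(0,1,0,0)}(x)|_{x^{k-1}}=C_{k-1}$; a direct structural argument forcing $\{\sigma_{k-1},\sigma_k\}=\{1,k\}$ gives $Q_{k,132}^{(1,0,0,1)}(x)|_{x^{k-2}}=2C_{k-2}$, and analogously $\sigma_1=k$ together with $\sigma_k=1$ gives $Q_{k,132}^{(0,1,0,1)}(x)|_{x^{k-2}}=C_{k-2}$.

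Using these top coefficients, the middle indices $3\le i\le n-3$ in the sum each contribute $C_{i-2}\cdot 2C_{n-i-2}$ to the $x^{n-4}$ coefficient; the Catalan convolution $\sum_{j=0}^{n-4}C_jC_{n-4-j}=C_{n-3}$ reduces this sub-sum to $2C_{n-3}-4C_{n-4}$. The indices $i=2$ and $i=n-2$ each contribute $2C_{n-4}$, using $Q_{2,132}^{(1,0,0,1)}(x)=2$ and $Q_{2,132}^{(0,1,0,0)}(x)=1+x$ together with the top term of the remaining factor. For $i=n-1$ I would establish the auxiliary identity $Q_{k,132}^{(0,1,0,0)}(x)|_{x^{k-2}}=C_{k-1}$ combinatorially: a $132$-avoider of length $k$ has exactly two left-to-right maxima precisely when $\sigma_1=k-1$, $\sigma_j=k$ for some $2\le j\le k$, and the two blocks on either side of $k$ are $132$-avoiders, summing $C_{j-2}C_{k-j}$ to $C_{k-1}$ by Catalan convolution; this term thus contributes $C_{n-3}$. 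The running total of known contributions is $3C_{n-3}$.

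The remaining two contributions are $Q_{n-1,132}^{(1,0,0,1)}(x)|_{x^{n-4}}$ (from $i=1$) and $Q_{n-1,132}^{(0,1,0,1)}(x)|_{x^{n-4}}$ (the extra term); the theorem will follow provided their sum equals $5C_{n-3}+C_{n-4}$. To finish the proof I would derive auxiliary recursions for $Q_{n,132}^{(1,0,0,1)}(x)$ and $Q_{n,132}^{(0,1,0,1)}(x)$ via the usual decomposition on the position of $n$, and then perform the same four-boundary-case plus Catalan-convolution-middle analysis on each to extract the second-highest coefficient. This is the main obstacle: these specific second-highest coefficients are not recorded in \cite{kitremtie} or \cite{kitremtieII}, so new lemmas must be proved (either separately for each series or via a single combined identity for the sum). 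Combining yields $3C_{n-3}+(5C_{n-3}+C_{n-4})=8C_{n-3}+C_{n-4}$, as required.
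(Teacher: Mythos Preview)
Your approach is essentially the same as the paper's: both apply the recursion \eqref{lk0m-rec} with $\ell=k=m=1$, extract the coefficient of $x^{n-4}$, handle the convolution $\sum_{i=2}^{n-2} C_{i-2}\cdot 2C_{n-i-2}=2C_{n-3}$ via the Catalan identity, and reduce to three ``second-highest coefficient'' inputs. Your partition of the sum (pulling out $i=2$ and $i=n-2$ separately) is cosmetically different but arithmetically identical.

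The one substantive discrepancy is your claim that the coefficients $Q_{n-1,132}^{(1,0,0,1)}(x)|_{x^{n-4}}$ and $Q_{n-1,132}^{(0,1,0,1)}(x)|_{x^{n-4}}$ are ``not recorded in \cite{kitremtie} or \cite{kitremtieII}'' and therefore constitute an obstacle requiring new lemmas. In fact the paper simply cites
\[
Q_{n-1,132}^{(1,0,0,1)}(x)|_{x^{n-4}} = 3C_{n-3}, \qquad
Q_{n-1,132}^{(0,1,0,1)}(x)|_{x^{n-4}} = 2C_{n-3}+C_{n-4}, \qquad
Q_{n-2,132}^{(0,1,0,0)}(x)|_{x^{n-4}} = C_{n-3}
\]
from those references, and the sum $5C_{n-3}+C_{n-4}$ you were aiming for falls out immediately. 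So there is no obstacle: the missing ingredient you planned to derive from scratch is already available, and once you invoke it your proof is complete and matches the paper's. (Your self-contained combinatorial argument for $Q_{k,132}^{(0,1,0,0)}(x)|_{x^{k-2}}=C_{k-1}$ is correct and a nice addition, but the paper likewise cites this from \cite{kitremtie}.)
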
 
\begin{proof}
In this case, the recursion for $Q_{n,132}^{(1,1,0,1)}(x)$ is 
\begin{equation*}
Q_{n,132}^{(1,1,0,1)}(x) = Q_{n-1,132}^{(0,1,0,1)}(x) + 
\sum_{i=1}^{n-1} Q_{i-1,132}^{(0,1,0,0)}(x)Q_{n-i,132}^{(1,0,0,1)}(x).
\end{equation*}
It was proved in \cite{kitremtie} that for $n \geq 1$, the  highest power of $x$ that occurs in $Q_{n,132}^{(0,1,0,0)}(x)$ is $x^{n-1}$ which occurs with a coefficient of $C_{n-1}$. 
It was proved in \cite{kitremtieII} that for $n \geq 3$, the highest power of $x$ that occurs in $Q_{n,132}^{(1,0,0,1)}(x)$ is $x^{n-2}$ which occurs with a coefficient of $2C_{n-2}$. 
It follows that 
\begin{eqnarray*}
Q_{n,132}^{(1,1,0,1)}(x)|_{x^{n-4}} &=& 
Q_{n-1,132}^{(0,1,0,1)}(x)|_{x^{n-4}} + 
Q_{n-1,132}^{(1,0,0,1)}(x)|_{x^{n-4}} + 
Q_{n-2,132}^{(0,1,0,0)}(x)|_{x^{n-4}} + \\
&& \sum_{i=2}^{n-2} Q_{i-1,132}^{(0,1,0,0)}(x)|_{x^{i-2}}
Q_{n-i,132}^{(1,0,0,1)}(x)|_{x^{n-i-2}}.
\end{eqnarray*}
It was shown in \cite{kitremtie} and \cite{kitremtieII} that   
\begin{eqnarray*} 
Q_{n-1,132}^{(0,1,0,1)}(x)|_{x^{n-4}} &=& 2C_{n-3}+C_{n-4} 
\ \mbox{for } n \geq 5,\\
Q_{n-1,132}^{(1,0,0,1)}(x)|_{x^{n-4}} &=& 3C_{n-3}  
\ \mbox{for } n \geq 5, \ \mbox{and} \\
Q_{n-2,132}^{(0,1,0,0)}(x)|_{x^{n-4}} &=& C_{n-3} 
\ \mbox{for } n \geq 5.
\end{eqnarray*}
Thus, for $n \geq 5$, 
\begin{eqnarray*}
Q_{n,132}^{(1,1,0,1)}(x)|_{x^{n-4}} &=& 2C_{n-3}+C_{n-4} + 3C_{n-3} +C_{n-3} +  \sum_{i=2}^{n-2}C_{i-2}2C_{n-i-2} \\
&=& 6 C_{n-3} +C_{n-4} + 2 \sum_{i=2}^{n-2}C_{i-2}C_{n-i-2} \\
&=&6 C_{n-3} +C_{n-4}+2C_{n-3} = 8C_{n-3}+C_{n-4}.
\end{eqnarray*}
\end{proof}

The sequence $(Q_{n,132}^{(1,1,0,1)}(0))_{n \geq 1}$ starts out 
$1,2,5,10,17,26,37,50,82,\ldots$ which 
is the sequence A002522 in the OEIS.  The $n$-th element of the sequence 
has the formula $(n-1)^2+1$. This can be verified by 
computing the generating function $Q_{132}^{(1,1,0,1)}(t,0)$. That is, 
we proved in \cite{kitremtie} and \cite{kitremtieII} that 
\begin{eqnarray*}
Q_{132}^{(0,1,0,0)}(t,0) &=& \frac{1}{1-t},\\
Q_{132}^{(1,0,0,1)}(t,0) &=& \frac{1-2t+2t^2}{(1-t)^3}, \mbox{and} \\
Q_{132}^{(0,1,0,1)}(t,0) &=& \frac{1}{1-t}+\frac{t^2}{(1-t)^2}.
\end{eqnarray*}
Plugging these formulas into (\ref{eq:1101}), one can compute 
that 
$$Q_{132}^{(1,1,0,1)}(t,0) = \frac{1-3t+5t^2-2t^3+t^4}{(1-t)^3}.$$

\begin{problem} Find a direct combinatorial proof of the fact that
$Q_{n,132}^{(1,1,0,1)}(0) = (n-1)^2+1$ for $n \geq 1$. \end{problem}
\begin{align*}
&Q_{132}^{(2,1,0,1)}(t,x) = 1+t+2 t^2+5 t^3+14 t^4+ (33+9 x)t^5+ 
\left(71+43 x+18 x^2\right)t^6+\\
& \left(146+137 x+101 x^2+45 x^3\right) t^7+ \\
&\left(294+368 x+367
x^2+275 x^3+126 x^4\right)t^8+\\
& \left(587+906 x+1100 x^2+1079 x^3+812 x^4+378 x^5\right)t^9+\\
& \left(1169+2125 x+2973 x^2+3463 x^3+3352 x^4+2526
x^5+1188 x^6\right)t^{10}+\cdots
\end{align*}

\vspace{-0.5cm}

\begin{align*}
&Q_{132}^{(3,1,0,1)}(t,x) = 1+t+2 t^2+5 t^3+14 t^4+42 t^5+(116+16x)t^6+\\
& \left(308+89 x+32 x^2\right)t^7+ \left(807+341 x+202 x^2+80 x^3\right)t^8+\\
& \left(2108+1140
x+849 x^2+541 x^3+224 x^4\right)t^9+ \\
&\left(5507+3583 x+3046 x^2+2406 x^3+1582 x^4+672 x^5\right)t^{10}+\\
& \left(14397+10897 x+10141 x^2+9039 x^3+7310
x^4+4890 x^5+2112 x^6\right)t^{11}+\cdots.
\end{align*}

It is not difficult to show that for $n \geq k+3$, 
the highest power or $x$ that occurs in $Q_{n,132}^{(k,1,0,1)}(x)$ is 
$x^{n-k-2}$ which appears with a coefficient of 
$(k+1)^2C_{n-k-2}$.  That is, the maximum number of 
occurrences of $MMP(k,1,0,1)$ for a $\sg \in S_n(132)$ occurs when 
$\sg$ is of the form $x \tau \beta$ where 
$x \in \{n-k, \ldots, n\}$, $\beta$ is a shuffle of 1 with 
the increasing sequence which results from 
$(n-k) (n-k+1) \ldots n$ by removing $x$, and $\tau$ is a 
132-avoiding permutation on $2, \ldots, n-k-1$.  Thus 
we have $k+1$ choices for $x$ and, once $x$ is chosen, we have 
$k+1$ choices for $\beta$, and $C_{n-k-2}$ choices for $\tau$.

%

\begin{align*}
&Q_{132}^{(1,1,0,2)}(t,x) = 1+t+2 t^2+5 t^3+14 t^4+(32+10 x)t^5+
\ \ \ \ \ \ \ \  \ \ \ \ \ \ \ \ \ \ \ \ \ \ \ \ \ \\
& \left(62+50 x+20 x^2\right)t^6+ \left(107+149 x+123 x^2+50 x^3\right)t^7+\\
& \left(170+345 x+433
x^2+342 x^3+140 x^4\right)t^8+\\
& \left(254+685 x+1154 x^2+1327 x^3+1022 x^4+420 x^5\right)t^9+\\
& \left(362+1225 x+2589 x^2+3868 x^3+4228 x^4+3204
x^5+1320 x^6\right)t^{10}+ \cdots .
\end{align*}

\vspace{-0.5cm}

\begin{align*}
&Q_{132}^{(2,1,0,2)}(t,x) = 1+t+2 t^2+5 t^3+14 t^4+42 t^5+(105+27 x)t^6+
\ \ \ \ \ \ \ \ \ \ \ \ \ \ \ \ \\
& \left(235+140 x+54 x^2\right)t^7+ \left(494+470 x+331 x^2+135 x^3\right)
t^8+\\
& \left(1004+1301
x+1275 x^2+904 x^3+378 x^4\right)t^9+\\
& \left(2007+3248 x+3960 x^2+3773 x^3+2674 x^4+1134 x^5\right)t^{10}+ 
\cdots .
\end{align*}

\vspace{-0.5cm}

\begin{align*}
&Q_{132}^{(3,1,0,2)}(t,x) = 1+t+2 t^2+5 t^3+14 t^4+42 t^5+132 t^6+ (373+56 x)
t^7+\ \ \ \\
&\left(998+320 x+112 x^2\right) t^8+ \left(2615+1233 x+734 x^2+280 x^3\right)t^9+\\
&
\left(6813+4092 x+3131 x^2+1976 x^3+784 x^4\right)t^{10}+\\
& \left(17749+12699 x+11223 x^2+8967 x^3+5796 x^4+2352 x^5\right)t^{11}+ 
\cdots .
\end{align*}

\vspace{-0.5cm}

\begin{align*}
&Q_{132}^{(1,2,0,2)}(t,x) = 1+t+2 t^2+5 t^3+14 t^4+42 t^5+ (107+25 x)t^6+
\ \ \ \ \ \ \ \ \ \ \ \ \ \\
& \left(233+146 x+50 x^2\right)t^7+ \left(450+498 x+357 x^2+125 x^3\right)t^8+\\
& \left(794+1299
x+1429 x^2+990 x^3+350 x^4\right)t^9+\\
& \left(1307+2869 x+4263 x^2+4353 x^3+2954 x^4+1050 x^5\right)t^{10}+ \cdots .
\end{align*}

\vspace{-0.5cm}

\begin{align*}
&Q_{132}^{(2,2,0,2)}(t,x) = 1+t+2 t^2+5 t^3+14 t^4+42 t^5+132 t^6+ (348+81 x)t^7+\ \ \\
& \left(811+457 x+162 x^2\right)t^8+ \left(1747+1625 x+1085 x^2+405 x^3\right)t^9+\\
&
\left(3587+4663 x+4443 x^2+2969 x^3+1134 x^4\right)t^{10}+\\
& \left(7167+11864 x+14360 x^2+13201 x^3+8792 x^4+3402 x^5\right)t^{11}+ \cdots .
\end{align*}

\vspace{-0.5cm}

\begin{align*}
&Q_{132}^{(3,2,0,2)}(t,x) =  1+t+2 t^2+5 t^3+14 t^4+42 t^5+132 t^6+429 t^7+
\ \ \ \ \ \ \ \ \ \ \ \\
& (1234+196 x) t^8 +2  \left(1657+578 x+196 x^2\right)t^9+\\
&  \left(8643+4497 x+2676 x^2+980
x^3\right)t^{10}+\\
& \left(22345+14839 x+11622 x^2+7236 x^3+2744 x^4\right)t^{11}+\cdots .
\end{align*}

\begin{problem} In all the cases above, it seems that for 
$n \geq \ell +k +m+1$, the highest power of $x$ in
$Q_{n,132}^{(\ell,k,0,m)}(x)$ is $x^{n-k-\ell}$ which appears 
with a coefficient of $a_{\ell,k,m}C_{n-\ell-k-m}$ for some 
constant $a_{\ell,k,m}$.  Prove that this is the case and 
find a formula for $a_{\ell,k,m}$.
\end{problem}

\section{$Q_{n,132}^{(a,b,c,d)}(x) = Q_{n,132}^{(a,d,c,b)}(x)$ 
where $a,b,c,d \geq 1$}

By Lemma \ref{sym},  we only need to consider the case of $Q_{n,132}^{(a,b,c,d)}(x)$. Suppose that $a,b,c,d \geq 1$ and $n \geq b+d$.  
It is clear that $n$ can never match 
the pattern $\MMP(a,b,c,d)$ for $a,b,c,d \geq 1$ in any 
$\sg \in S_n(132)$. If $\sg = \sg_1 \ldots \sg_n \in S_n(132)$  
and $\sg_i =n$, then we have three cases, depending 
on the value of $i$. \\
\ \\
{\bf Case 1.} $i < b$. It is easy to see that as we sum 
over all the permutations $\sg$ in $S_n^{(i)}(132)$, our choices 
for the structure for $A_i(\sg)$ will contribute a factor 
of $C_{i-1}$ to $Q_{n,132}^{(a,b,c,d)}(x)$ since 
the elements in $A_i(\sg)$ do not have enough elements 
to the left to match $\MMP(a,b,c,d)$ in $\sg$. 
Similarly, our choices 
for the structure for $B_i(\sg)$ will contribute a factor 
of $Q_{n-i,132}^{(a,b-i,c,d)}(x)$ to $Q_{n,132}^{(a,b,c,d)}(x)$ 
since $\sg_1 \ldots \sg_i$ will automatically be 
in the second quadrant relative to the coordinate system 
with the origin at $(s,\sg_s)$ for any $s > i$.  Thus, the 
permutations in Case 1 will contribute 
$$\sum_{i=1}^{b-1} C_{i-1}
Q_{n-i,132}^{(a,b-i,c,d)}(x)$$ 
to $Q_{n,132}^{(a,b,c,d)}(x)$.\\
\ \\
{\bf Case 2.} $b \leq i < n-d$. It is easy to see that as we sum 
over all the permutations $\sg$ in $S_n^{(i)}(132)$, our choices 
for the structure for $A_i(\sg)$ will contribute a factor 
of $Q_{i-1,132}^{(a-1,b,c,0)}(x)$ to $Q_{n,132}^{(a,b,c,d)}(x)$ since 
the elements in $B_i(\sg)$ will all be in the fourth quadrant  
and $\sg_i =n$ is in the first quadrant
relative to a coordinate system centered at $(r,\sg_r)$ for 
$r \leq i$  in this case. 
Similarly, our choices 
for the structure for $B_i(\sg)$ will contribute a factor 
of $Q_{n-i,132}^{(a,0,c,d)}(x)$ to $Q_{n,132}^{(a,b,c,d)}(x)$ 
since $\sg_1 \ldots \sg_i$ will automatically be 
in the second quadrant relative to the coordinate system 
with the origin at $(s,\sg_s)$ for any $s > i$.  Thus, the 
permutations in Case 2 will contribute 
$$\sum_{i=b}^{n-d} Q_{i-1,132}^{(a-1,b,c,0)}(x) 
Q_{n-i,132}^{(a,0,c,d)}(x)$$ 
to $Q_{n,132}^{(a,b,c,d)}(x)$.\\
\ \\
{\bf Case 3.} $i \geq n-d +1$. It is easy to see that as we sum 
over all the permutations $\sg$ in $S_n^{(i)}(132)$, our choices 
for the structure for $A_i(\sg)$ will contribute a factor 
of $Q_{i-1,132}^{(a-1,b,c,d-(n-i))}(x)$ 
to $Q_{n,132}^{(a,b,c,d)}(x)$ since $\sg_i =n$ 
will be in the first quadrant and 
the elements in $B_i(\sg)$ will all be in the fourth quadrant 
relative to a coordinate system centered at $(r,\sg_r)$ for 
$r \leq i$ in this case. 
Similarly, our choices 
for the structure for $B_i(\sg)$ will contribute a factor 
of $C_{n-i}$ to $Q_{n,132}^{(a,b,c,d)}(x)$ 
since $\sg_j$, where $j >i$, does not have enough elements 
to its right to match $\MMP(a,b,c,d)$ in $\sg$.  Thus, the 
permutations in Case 3 will contribute 
$$\sum_{i=n-d+1}^{n} Q_{i-1,132}^{(a-1,b,c,d-(n-i))}(x) 
C_{n-i}$$ 
to $Q_{n,132}^{(a,b,c,d)}(x)$. Thus, 
we have the following. For $ n \geq a+b+c+d+1 $,  
\begin{eqnarray}\label{sbcdrec} 
Q_{n,132}^{(a,b,c,d)}(x) &=& \sum_{i=1}^{b-1} C_{i-1} 
Q_{n-i,132}^{(a,b-i,c,d)}(x) + 
\sum_{i=b}^{n-d} Q_{i-1,132}^{(a-1,b,c,0)}(x) 
Q_{n-i,132}^{(a,0,c.d)}(x) +\nonumber \\
&& \sum_{i=n-d+1}^{n} Q_{i-1,132}^{(a-1,b,c,d-(n-i))}(x) C_{n-i}.
\end{eqnarray}
Multiplying (\ref{sbcdrec}) by $t^n$ and summing, we obtain 
the following theorem. 
\begin{theorem} For all $a,b,c,d \geq 1$, 
\begin{eqnarray*}\label{eq:abcd0}
Q_{132}^{(a,b,c,d)}(t,x) &=& \sum_{p=0}^{b+d-1} C_p t^p + \nonumber \\
&&t\sum_{i=0}^{b-2} C_i t^i \left(  Q_{132}^{(a,b-1-i,c,d)}(t,x) -
\sum_{r=0}^{b-i+d-2} C_rt^r \right) + \nonumber \\
&&t \left( Q_{132}^{(a-1,b,c,0)}(t,x) - \sum_{i=0}^{b-2} C_i t^i\right) 
\left( Q_{132}^{(a,0,c,d)}(t,x) - \sum_{j=0}^{d-1} C_j t^j \right) + 
\nonumber \\
&& t\sum_{j=0}^{d-1} C_j t^j \left( Q_{132}^{(a-1,b,c,d-j)}(t,x) - 
\sum_{s=0}^{b+d-j-2} C_st^s\right).
\end{eqnarray*}
\end{theorem}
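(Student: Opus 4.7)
The argument is a direct generating-function translation of the three-case recursion (\ref{sbcdrec}), following the template already established in Theorems \ref{thm:Q0klm} and \ref{thm:Qlk0m}.

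First I would handle the low-order terms. For $n \le b+d-1$, no element of any $\sigma \in S_n(132)$ can simultaneously have at least $b$ positions to its left and at least $d$ positions to its right, so $\sigma_i$ cannot match $\MMP(a,b,c,d)$, and therefore $Q_{n,132}^{(a,b,c,d)}(x) = C_n$. This accounts for the leading piece $\sum_{p=0}^{b+d-1} C_p t^p$ in the theorem. The recursion (\ref{sbcdrec}) applies precisely to the remaining range $n \ge b+d$, where the three index ranges for Cases 1, 2, 3 partition $\{1,\dots,n\}$.

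Next I would multiply (\ref{sbcdrec}) by $t^n$, sum for $n \ge b+d$, and process the three case contributions separately. In Case 1, interchanging the order of summation and reindexing $i \mapsto i+1$ yields
\[
t \sum_{i=0}^{b-2} C_i t^i \sum_{m \ge b+d-i-1} t^m Q_{m,132}^{(a,b-i-1,c,d)}(x);
\]
since $Q_{m,132}^{(a,b-i-1,c,d)}(x) = C_m$ for all $m \le b-i+d-2$ (by the same ``not enough room'' observation applied to the pattern $\MMP(a,b-i-1,c,d)$), the inner sum equals $Q_{132}^{(a,b-i-1,c,d)}(t,x) - \sum_{r=0}^{b-i+d-2} C_r t^r$, producing the second summand of the theorem. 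Case 3 is analogous after the substitution $j = n-i$, which runs $j$ from $0$ to $d-1$ and decouples the double sum into $t \sum_{j=0}^{d-1} C_j t^j \bigl( Q_{132}^{(a-1,b,c,d-j)}(t,x) - \sum_{s=0}^{b+d-j-2} C_s t^s \bigr)$, matching the final summand of the theorem.

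The only genuinely two-factor case is Case 2. Setting $j = i-1$ and $k = n-i$ turns the inner sum into $t \bigl( \sum_{j \ge b-1} t^j Q_{j,132}^{(a-1,b,c,0)}(x) \bigr) \bigl( \sum_{k \ge d} t^k Q_{k,132}^{(a,0,c,d)}(x) \bigr)$. Here I would verify the two truncation identities $Q_{j,132}^{(a-1,b,c,0)}(x) = C_j$ for $j \le b-2$ (insufficient room on the left to accommodate $b$ larger elements) and $Q_{k,132}^{(a,0,c,d)}(x) = C_k$ for $k \le d-1$ (insufficient room on the right to accommodate $d$ smaller elements), which produce precisely the middle product term of the theorem. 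Combining the four contributions gives the claimed identity. The only real obstacle is purely bookkeeping---tracking the index shifts in each case sum and confirming the truncation thresholds via the no-room argument---and no new combinatorial ingredient beyond what appeared in the proofs of Theorems \ref{thm:Q0klm} and \ref{thm:Qlk0m} is needed.
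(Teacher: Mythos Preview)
Your proposal is correct and follows essentially the same approach as the paper: derive the three-case recursion (\ref{sbcdrec}) for $n\ge b+d$, multiply by $t^n$, sum, and collapse each case sum using the ``not enough room'' observation that the relevant $Q_{\cdot,132}^{(\cdot,\cdot,\cdot,\cdot)}(x)$ coincide with Catalan numbers below the stated thresholds. The paper merely writes ``Multiplying (\ref{sbcdrec}) by $t^n$ and summing, we obtain the following theorem,'' so your write-up is a faithful and more detailed execution of exactly that plan, with no new ideas required.
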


Thus, for example,
\begin{eqnarray*}\label{eq:1111}
Q_{132}^{(1,1,1,1)}(t,x) &=& 1+t +
tQ_{132}^{(0,1,1,0)}(t,x)\left(Q_{132}^{(1,0,1,1)}(t,x) -1\right)+ \nonumber\\
&&t(Q_{132}^{(0,1,1,1)}(t,x)-1).
\end{eqnarray*}
and, for $k \geq 2$, 
\begin{eqnarray*}\label{eq:k111}
Q_{132}^{(k,1,1,1)}(t,x) &=& 1+t +
tQ_{132}^{(k-1,1,1,0)}(t,x)\left(Q_{132}^{(k,0,1,1)}(t,x) -1\right)+ \nonumber\\
&&t(Q_{132}^{(k-1,1,1,1)}(t,x)-1).
\end{eqnarray*}

\vspace{-0.5cm}

\begin{eqnarray*}
&&Q_{132}^{(1,1,1,1)}(t,x) =  1+t+2 t^2+5 t^3+14 t^4+ (38+4 x)t^5+ 
\left(99+29 x+4 x^2\right)t^6+\\
&& \left(249+135 x+41 x^2+4 x^3\right)t^7+ \left(609+510 x+250 x^2+57
x^3+4 x^4\right)t^8+\\
&& \left(1457+1701 x+1177 x^2+446 x^3+77 x^4+4 x^5\right)t^9+\\
&& \left(3425+5220 x+4723 x^2+2564 x^3+759 x^4+101 x^5+4 x^6\right)t^{10}+
\cdots, 
\end{eqnarray*}

\begin{eqnarray*}
&&Q_{132}^{(2,1,1,1)}(t,x) = 
1+t+2 t^2+5 t^3+14 t^4+42 t^5+ (123+9x)t^6+ \ \ \ \ \ \ \ \ \ \ \ \  \\
&&\left(350+70 x+9 x^2\right)t^7+ \left(974+350 x+97 x^2+9 x^3\right)t^8+ \\
&&\left(2667+1433
x+620 x^2+133 x^3+9 x^4\right)t^9+ \\
&&\left(7218+5235 x+3079 x^2+1077 x^3+178 x^4+9 x^5\right)t^{10} + \cdots, 
\end{eqnarray*}
and 
\begin{eqnarray*}
&&Q_{132}^{(3,1,1,1)}(t,x) = 1+t+2 t^2+5 t^3+14 t^4+42 t^5+132 t^6+ 
(413+16 x)t^7+ \ \ \ \   \\
&&\left(1277+137 x+16 x^2\right)t^8+ 
\left(3909+752 x+185 x^2+16 x^3\right)t^9+ \\
&&\left(11881+3383 x+1267 x^2+249 x^3+16 x^4\right)t^{10} + \cdots .
\end{eqnarray*}

It is easy to explain the coefficient to the highest power that appears in 
$Q_{n,132}^{(k,1,1,1)}(x)$ for $k \geq 1$.  That is, the maximum number of 
matches of $\MMP(1,1,1,1)$ for $\sg \in S_n(132)$ is when 
$\sg$ is of the form $x~\alpha~\beta$ where 
$x \in \{n-k, \ldots, n\}$, $\beta$ is a shuffle of 1 with 
the sequence $(n-k) (n-k+1) \ldots n$ with $x$ removed,  and 
$\alpha = 23 \ldots (n-k-1)$. Note that we have $k+1$ choices for 
$x$ and, once we chosen $x$, we have $k+1$ choices for $\beta$. 
Thus, the highest power 
of $x$ that occurs in $Q_{n,132}^{(k,1,1,1)}(x)$ is $x^{n-k-3}$ which occurs with a coefficient of $(k+1)^2$ for $n \geq k+4$. 
 
We also have
\begin{eqnarray*} 
Q_{132}^{(0,1,1,0)}(t,0) &=& \frac{1-t}{1-2t}, \\
Q_{132}^{(1,0,1,1)}(t,0) &=& 1+t\left(\frac{1-t}{1-2t}\right)^2, \ \mbox{and}\\
Q_{132}^{(0,1,1,1)}(t,0) &=& \frac{1-4t+5t^2-t^3}{(1-2t)^2(1-t)},
\end{eqnarray*}
to compute that 
\begin{equation*}
Q_{132}^{(1,1,1,1)}(t,0) = \frac{1-6t+13t^2-11t^3+3t^4-2t^5+t^6}{(1-t)(1-2t)^3}.
\end{equation*}

Note that $Q_{132}^{(1,1,1,1)}(t,0)$ is the generating function of 
the permutations that avoid the patterns from the set
$\{132,52314,52341,42315,42351\}$.

Finally, we can also determine the second highest coefficient 
of $x$ in $Q_{n,132}^{(1,1,1,1)}(x)$. 

\begin{theorem} For all $n \geq 6$, 
 $$Q_{n,132}^{(1,1,1,1)}(x)|_{x^{n-5}} = 17+4\binom{n-3}{3}.$$ 
\end{theorem}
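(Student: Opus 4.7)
The plan is to apply the recursion (\ref{sbcdrec}) specialized to $a=b=c=d=1$. Since $b=d=1$, the first inner sum of the recursion is empty and Case 3 collapses to $i=n$, yielding
\begin{equation*}
Q_{n,132}^{(1,1,1,1)}(x) = \sum_{i=1}^{n-1} Q_{i-1,132}^{(0,1,1,0)}(x)\, Q_{n-i,132}^{(1,0,1,1)}(x) + Q_{n-1,132}^{(0,1,1,1)}(x).
\end{equation*}
By Lemma \ref{sym}, $Q_{n-i,132}^{(1,0,1,1)}(x) = Q_{n-i,132}^{(1,1,1,0)}(x)$, so every factor appearing on the right has already been analyzed, either in \cite{kitremtieII} or in the preceding section of this paper.

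The key first observation is a degree bound. The highest power of $x$ in $Q_{m,132}^{(0,1,1,0)}(x)$ is $x^{m-2}$ and the highest power of $x$ in $Q_{m,132}^{(1,1,1,0)}(x)$ is $x^{m-3}$, so for $3 \le i \le n-3$ the product $Q_{i-1,132}^{(0,1,1,0)}(x)\,Q_{n-i,132}^{(1,1,1,0)}(x)$ has degree at most $(i-3)+(n-i-3) = n-6$, and therefore contributes nothing to the coefficient of $x^{n-5}$. Consequently the only summands that can contribute are the four boundary indices $i\in\{1,2,n-2,n-1\}$, together with the separate term $Q_{n-1,132}^{(0,1,1,1)}(x)|_{x^{n-5}}$. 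At each of these five boundaries one of the factors is either a constant (e.g.\ $Q_{0,132}^{(0,1,1,0)}=Q_{1,132}^{(0,1,1,0)}=1$, $Q_{1,132}^{(1,1,1,0)}=1$, $Q_{2,132}^{(1,1,1,0)}=2$) or it must be evaluated at its leading or second-leading coefficient.

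The next step is to substitute the previously established closed forms. I would use the leading values $Q_{m,132}^{(0,1,1,0)}(x)|_{x^{m-2}}=1$ and $Q_{m,132}^{(1,1,1,0)}(x)|_{x^{m-3}}=2$, together with the second-highest coefficients $Q_{m,132}^{(0,1,1,0)}(x)|_{x^{m-3}} = 2+\binom{m-1}{2}$ (from \cite{kitremtieII}) and $Q_{m,132}^{(1,1,1,0)}(x)|_{x^{m-4}} = 6+2\binom{m-2}{2}$ (the earlier theorem of this section), as well as the formula $Q_{m,132}^{(0,1,1,1)}(x)|_{x^{m-4}} = 5+\binom{m-2}{2}$ proved just above. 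Shifting $m\mapsto n-1$, $n-2$, $n-3$ as appropriate, the five boundary contributions become explicit polynomials in $n$, and a routine addition combines them into the claimed closed form; the hypothesis $n\ge 6$ is exactly what is needed to guarantee that each invoked second-highest-coefficient formula applies.

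The main obstacle, as in the earlier high-coefficient theorems of the paper, will be the bookkeeping in the boundary analysis: for each of the five cases one must correctly identify which factor becomes a low-index Catalan constant and which is being read off at its leading or second-leading power, taking care that the thresholds for the cited formulas (of the form ``for $n$ large enough'') are respected. Once this is done carefully the computation is mechanical, and no further structural ingredient beyond the recursion, Lemma \ref{sym}, and the already-cited leading- and second-leading-coefficient results is required.
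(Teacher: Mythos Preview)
Your approach is essentially identical to the paper's: both specialize the recursion (\ref{sbcdrec}) to $a=b=c=d=1$, bound the degree of the interior terms so that only the boundary indices $i\in\{1,2,n-2,n-1\}$ and the separate $Q_{n-1,132}^{(0,1,1,1)}$ term survive, and then substitute the previously established leading and second-leading coefficients. The only cosmetic difference is that you invoke Lemma~\ref{sym} to rewrite $Q^{(1,0,1,1)}$ as $Q^{(1,1,1,0)}$ before citing the earlier formulas, whereas the paper leaves the factor as $Q^{(1,0,1,1)}$ and uses the symmetry implicitly; note also that carrying out the ``routine addition'' actually yields $17+4\binom{n-3}{2}$ (as the paper's proof concludes and as the series data confirm), so the $\binom{n-3}{3}$ in the displayed statement is a typo.
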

\begin{proof}
The recursion of $Q_{n,132}^{(1,1,1,1)}(x)$ is 
\begin{equation*}
Q_{n,132}^{(1,1,1,1)}(x) = Q_{n-1,132}^{(0,1,1,1)}(x) +
\sum_{i=1}^{n-1} Q_{i-1,132}^{(0,1,1,0)}(x) Q_{n-i,132}^{(1,0,1,1)}(x).
\end{equation*}
For $n \geq 3$, the highest power of $x$ which occurs in 
$Q_{n,132}^{(0,1,1,0)}(x)$ is $x^{n-2}$ and for $n \geq 4$, 
the highest power of $x$ that occurs in $Q_{n,132}^{(1,0,1,1)}(x)$ is 
$x^{n-3}$. It follows that for $i =2,\ldots, n-3$, the 
highest power of $x$ that occurs in 
$Q_{i-1,132}^{(0,1,1,0)}(x) Q_{n-i,132}^{(1,0,1,1)}(x)$ is $x^{n-6}$. 
It follows that 
\begin{eqnarray*}
Q_{n,132}^{(1,1,1,1)}(x)|_{x^{n-5}} &=&
Q_{n-1,132}^{(1,0,1,1)}(x)|_{x^{n-5}} +
Q_{n-2,132}^{(1,0,1,1)}(x)|_{x^{n-5}} + 
2Q_{n-3,132}^{(0,1,1,0)}(x)|_{x^{n-5}}+\\
&&Q_{n-2,132}^{(0,1,1,0)}(x)|_{x^{n-5}}+
Q_{n-1,132}^{(0,1,1,1)}(x)|_{x^{n-5}}.
\end{eqnarray*}
But for $n \geq 6$, we have proved that 
\begin{eqnarray*}
Q_{n-1,132}^{(1,0,1,1)}(x)|_{x^{n-5}} &=& 6+2\binom{n-3}{2},\\
Q_{n-2,132}^{(1,0,1,1)}(x)|_{x^{n-5}} &=& 2, \\ 
2Q_{n-3,132}^{(0,1,1,0)}(x)|_{x^{n-5}} &=& 2C_1 =2, \\
Q_{n-2,132}^{(0,1,1,0)}(x)|_{x^{n-5}} &=& 2 +\binom{n-3}{2}, \ \mbox{and}\\
Q_{n-1,132}^{(0,1,1,1)}(x)|_{x^{n-5}} &=& 5 + \binom{n-3}{2}. 
\end{eqnarray*}
Thus, $Q_{n,132}^{(1,1,1,1)}(x)|_{x^{n-5}} = 17 +4 \binom{n-3}{2}$.
\end{proof}

\end{document}